\documentclass[]{interact}
\usepackage{graphicx}%
\usepackage{subfigure}
\usepackage{multirow}%
\usepackage{amsmath,amssymb,amsfonts}%
\usepackage{amsthm}%
\usepackage{mathrsfs}%
\usepackage[title]{appendix}%
\usepackage{xcolor}%
\usepackage{textcomp}%
\usepackage{manyfoot}%
\usepackage{booktabs}%
\usepackage{algorithm}%
\usepackage{algorithmicx}%
\usepackage{algpseudocode}%
\usepackage{listings}%
\usepackage{CJK}
\usepackage{indentfirst}
\usepackage{cases}
\usepackage{longtable}
\usepackage{array}
\usepackage{relsize}
\usepackage{url}

\usepackage{soul} 
\usepackage{color, xcolor} 

\usepackage{epstopdf}
\usepackage[caption=false]{subfig}

\usepackage[numbers,sort&compress]{natbib}
\bibpunct[, ]{[}{]}{,}{n}{,}{,}
\makeatletter
\def\NAT@def@citea{\def\@citea{\NAT@separator}}
\makeatother

\theoremstyle{plain}
\newtheorem{Theorem}{Theorem}

\newtheorem{Lemma}{Lemma}
\newtheorem{Proposition}{Proposition}

\theoremstyle{definition}
\newtheorem{Definition}{Definition}

\newtheorem{Assumption}{Assumption}

\theoremstyle{remark}
\newtheorem{Remark}{Remark}

\begin{document}
	
	
\title{Exact-Penalty Prox-Linear Methods for Bilevel Optimization with $\ell_1$ Lower-Level Gradient Penalty}
	
\author{
\name{Yutong Zheng\textsuperscript{a}, Jiani Li\textsuperscript{a}
and
Qingna Li\textsuperscript{a,b}\thanks{Email: qnl@bit.edu.cn. Corresponding author. } 
}
\affil{\textsuperscript{a}School of Mathematics and Statistics, Beijing Institute of Technology, Beijing, China \textsuperscript{b}Beijing Key Laboratory on MCAACI/Key Laboratory of Mathematical Theory and Computation in Information Security, Beijing Institute of Technology, Beijing, China
}
	}
	
\maketitle
	
\begin{abstract}
Bilevel optimization is a fundamental framework for hierarchical decision-making, but its solution is challenging due to the implicit and typically set-valued nature of the lower-level optimality condition. In this paper, 
	we study bilevel optimization problems through an exact-penalty reformulation based on the $\ell_1$-norm of the lower-level gradient. Under suitable regularity assumptions, 
	we show that this penalty defines a distance-bound function and yields an exact penalty property for sufficiently large penalty parameters.
	To solve the resulting nonsmooth penalized problem, we propose an exact-penalty prox-linear (EPPL)  method and establish a stationarity-oriented convergence guarantee. We further specialize the method to the simple bilevel setting, where the subproblem admits an explicit dual reformulation as a box-constrained quadratic program. This structure leads to a dual spectral projected gradient method with closed-form primal recovery, for which convergence of the inner dual iterates is proved. 
	Numerical experiments on a minimum-norm least-squares bilevel model show that the proposed method is effective in reducing both the lower-level and upper-level gaps to high accuracy. 
	Compared with several existing methods, the proposed approach attains the best final solution accuracy on the tested instance.
\end{abstract}
	
\begin{keywords}
Bilevel optimization; Exact Penalty; Prox-Linear Method; Duality; Spectral Projected Gradient
\end{keywords}

\section{Introduction}\label{sec1}
Bilevel optimization concerns hierarchical decision problems in which the feasible set of an upper-level problem is determined by the solution set of a lower-level optimization problem. Owing to this nested structure, bilevel models arise in a broad range of areas, including hyperparameter optimization, meta-learning, signal and image processing \cite{colson2007overview,sinha2018review,franceschi2018bilevel}. 

Bilevel optimization is a classical topic in mathematical programming. 
In machine learning, bilevel formulations now play a central role in hyperparameter optimization, meta-learning, neural architecture search, and other hierarchical learning pipelines. Representative examples include the bilevel framework for hyperparameter selection and meta-learning \cite{franceschi2018bilevel}, truncated back-propagation through lower-level dynamics \cite{shaban2019truncated}, implicit-gradient-based meta-learning \cite{rajeswaran2019meta}, the generic first-order framework beyond the lower-level singleton assumption \cite{liu2020generic}, and fully first-order stochastic bilevel methods such as \cite{kwon2023fully}. For nonsmooth lower-level learning models, Ochs et al.\ developed early gradient-based techniques that have had considerable influence on later bilevel optimization methods \cite{ochs2016nonsmooth}. 
Alongside these general-purpose machine-learning-oriented bilevel methods, another line of work studies problem-specific reformulations, especially for support vector classification and related learning models. For hyperparameter selection in $\ell_1$-loss support vector classification, Li, Li, and Zemkoho \cite{li2022svc} formulated the cross-validation problem as a bilevel program, converted it into an MPEC, and proposed a global-relaxation-based solution method. This MPEC perspective was further developed in \cite{li2025mpeccq}, where several classical constraint qualifications were analyzed in the context of bilevel hyperparameter optimization for machine learning. For multiple hyperparameter selection with feature selection in support vector classification, Qian, Li, and Zemkoho \cite{qian2023grlpn} proposed a global relaxation-based LP--Newton method. For support vector classification with logistic loss, Wang and Li \cite{wang2025logisticnewton} transformed the bilevel model into a KKT-based nonlinear program and developed a fast smoothing Newton method with superlinear local convergence. For large-scale MPECs arising from hyperparameter selection in $\ell_1$-support vector classification, Wang, Li, and Zhang \cite{wang2025sdnm} proposed a smoothing damped Newton method that directly exploits problem structure. Beyond hyperparameter selection, Zheng and Li \cite{zheng2025adversarial} studied bilevel models for adversarial learning and analyzed a case study in convex clustering. These works further illustrate the breadth of bilevel optimization in machine learning, while also showing that many successful approaches are strongly tailored to specific application structures and KKT/MPEC reformulations.

Penalty methods provide another important route for handling bilevel structure without explicitly differentiating through the lower-level solution map. A classical reference is the work of Ye, Zhu, and Zhu \cite{ye1997exact}, which established exact penalization and necessary optimality conditions for generalized bilevel programming. In the modern first-order literature, Lu and Mei \cite{lu2024penalty} studied penalty methods for a broad class of unconstrained and constrained bilevel problems by solving structured minimax subproblems. Shen, Xiao, and Chen \cite{shen2025pbgd} developed penalty-based bilevel gradient descent for constrained bilevel problems without lower-level strong convexity. In the simple bilevel setting, Chen et al.\ \cite{chen2024holder} further connected approximate solutions of the original problem and its penalized reformulation under H\"olderian error bounds. 

For simple bilevel optimization, a substantial body of work has focused on first-order algorithms tailored to convex and structured settings. Classical approaches include explicit descent methods \cite{solodov2007explicit}, minimal-norm selection methods \cite{beck2014minimal}, and $\epsilon$-subgradient schemes \cite{helou2017epsilon}. Subsequent developments introduced projected and incremental regularization strategies \cite{amini2019iterative}, methods for variational-inequality-constrained formulations \cite{kaushik2021vi}, and inertial or fixed-point type algorithms \cite{sabach2017first,shehu2021inertial}. More recently, the algorithmic scope has been broadened to accommodate nonsmooth outer objectives and more general first-order frameworks; see, for example, the ITALEX methodology of Doron and Shtern \cite{doron2023italex}, the online-convex-optimization-based framework of Shen, Ho-Nguyen, and K{\i}l{\i}n\c{c}-Karzan \cite{shen2023oco}, and the Bi-SG method of Merchav and Sabach for nonsmooth outer-level objectives \cite{merchav2023nonsmooth}.

A particularly active recent line of research studies non-asymptotic guarantees for convex simple bilevel optimization under weaker structural assumptions. Jiang et al.\ \cite{jiang2023cg} proposed a conditional-gradient method for simple bilevel problems with convex lower-level structure, while Cao et al.\ \cite{cao2024accelerated} developed an accelerated method for the convex smooth case. Wang, Shi, and Jiang \cite{wang2024bisection} established near-optimal complexity guarantees via a bisection framework, and Zhang et al.\ \cite{zhang2024fcbio} studied a functionally constrained reformulation and derived near-optimal guarantees under standard convex smoothness assumptions. In addition, Chen et al.\ \cite{chen2024holder} analyzed penalty-based methods under H\"olderian error bounds, and Giang-Tran, Ho-Nguyen, and Lee \cite{giangtran2025projectionfree} proposed a projection-free conditional-gradient method with simultaneous inner and outer convergence guarantees.

In this paper, we consider a bilevel optimization problem in which the upper-level decision variable is constrained by the solution set of a lower-level optimization problem. Rather than enforcing the lower-level optimality condition explicitly, we measure its violation through the $\ell_1$-norm of the lower-level gradient and incorporate this quantity into the upper-level objective as a penalty term. This leads to a penalized formulation that combines the original upper-level objective with a weighted lower-level gradient penalty.

To solve the resulting nonsmooth penalized problem, we propose a prox-linear method motivated by \cite{shen2025pbgd}. 
The common feature is that both works start from a penalty reformulation of bilevel optimization and use the lower-level gradient to construct a single-level penalized objective. Moreover, both works recognize that the nonsmooth penalized problem can be treated through a prox-linear type model.
The main difference lies in the modeling focus and the resulting algorithmic structure. The work \cite{shen2025pbgd} studies penalty-based bilevel gradient descent for constrained bilevel problems and develops a general penalty-based prox-linear treatment for its nonsmooth penalized formulation. In contrast, our paper focuses on an exact-penalty reformulation based on the $\ell_1$-norm of the lower-level gradient and specializes the method to the simple bilevel setting. This specialization allows us to derive an explicit dual reformulation of the prox-linear subproblem as a box-constrained quadratic program, which further leads to a dedicated dual SPG solver with nonmonotone line search and closed-form primal recovery. Therefore, our method should be viewed as a problem-specific exact-penalty prox-linear method that exploits the special dual geometry induced by the $\ell_1$ penalty.

The main contributions of this paper are summarized as follows.
\begin{itemize}
	\item We study an exact-penalty reformulation of bilevel optimization based on the $\ell_1$-norm of the lower-level gradient. Under suitable regularity assumptions, 
	we show that this penalty defines a distance-bound function and yields an exact reformulation for sufficiently large penalty parameters.
	
	\item For the resulting nonsmooth penalized problem, we develop an exact-penalty prox-linear method (EPPL) motivated by \cite{shen2025pbgd}. By linearizing the smooth part and the lower-level gradient mapping while keeping the $\ell_1$-structure of the penalty term, we obtain a structured strongly convex subproblem and establish a stationarity-oriented convergence guarantee for the outer iterations.
	
	\item We further specialize the method to the simple bilevel problem. In this setting, we derive an explicit dual reformulation of the subproblem as a box-constrained quadratic program, characterize its first-order optimality condition through a projected fixed-point equation. 
	
	\item Based on the dual box geometry, we design a practical spectral projected gradient (SPG) solver for the subproblem of simple bilevel problem, together with closed-form primal recovery. Numerical experiments on the minimum-norm least-squares bilevel model show that the proposed method is competitive with existing approaches and attains the best final solution accuracy among the tested methods.
\end{itemize}

The rest of the paper is organized as follows. Section~\ref{sec:penalty} introduces the exact-penalty reformulation of the bilevel problem based on the $\ell_1$-norm of the lower-level gradient and establishes its exactness under suitable assumptions. Section~\ref{sec:proxlinear} presents the exact-penalty prox-linear method for the penalized problem and provides its convergence analysis. Section~\ref{sec:simple} focuses on the simple bilevel problem, derives the dual reformulation of the subproblem.  Section~\ref{sec:dual-subproblem} develops the dual SPG solver and together with its convergence guarantee. Section~\ref{sec:numerics} reports numerical results on the YearPredictionMSD linear regression instance and compares the proposed method with several existing algorithms. Finally, Section~\ref{sec:conclusion} concludes the paper.

\textbf{Notations.} We use $\|\cdot\|$ to denote the $\ell_2$
norm. Given a nonempty closed set $\mathcal{S}\subseteq\mathbb{R}^d$, define the distance of $y\in\mathbb{R}^d$ to the set $\mathcal{S}$ by
$
d_{\mathcal{S}}(y):=\min_{y'\in\mathcal{S}}\|y-y'\|. 
$

\section{Exact Penalty Reformulation of Bilevel Problems}\label{sec:penalty}
This section studies the relation between the solutions of the bilevel problem (BP) and those of norm-gradient   penalized problem (NGP) by posting certain generic conditions.

Define
$
f:\mathbb{R}^{d_x}\times\mathbb{R}^{d_y}\to\mathbb{R}
\ \text{and}\ 
g:\mathbb{R}^{d_x}\times\mathbb{R}^{d_y}\to\mathbb{R}.
$
We consider the following bilevel problem: 
\begin{equation}\label{eq:BP}
	\min_{x,y}\ f(x,y)
	\quad \text{s.t.}\quad
	x\in C,\quad y\in S(x),
	\tag{BP}
\end{equation}
where
\[
S(x):=\arg\min_{u\in\mathbb R^{d_y}} g(x,u).
\]
Here $C\subseteq \mathbb R^{d_x}$ is a nonempty closed convex set, and $f,\ g$ are continuously differentiable. For every
$x\in C$, the lower-level solution set $S(x)$ is assumed to be nonempty and
closed.

\begin{Assumption}\label{ass:PL}
	For every $x\in C$, the function $g(x,\cdot)$ is continuously differentiable
	on $\mathbb R^{d_y}$ and satisfies the Polyak--\L{}ojasiewicz (PL) inequality with
	constant $1/\sqrt{\mu}$, namely,
	\[
	\|\nabla_y g(x,y)\|_2^2
	\ge
	\frac{1}{\sqrt{\mu}}\bigl(g(x,y)-v(x)\bigr),
	\qquad
	\forall y\in\mathbb R^{d_y},
	\]
	where 
	$
	v(x):=\min_{u\in\mathbb R^{d_y}}g(x,u).
	$
\end{Assumption}

Under Assumption~\ref{ass:PL}, the implicit lower-level optimality condition
$y\in S(x)$ can be equivalently replaced by the lower-level gradient equation.
Indeed, by Fermat's rule \cite[Theorem~10.1]{rockafellar1998variational},
every global minimizer of the unconstrained differentiable lower-level problem
satisfies the stationarity condition $\nabla_y g(x,y)=0$.  Conversely, the PL
condition implies that every stationary point is a global minimizer
\cite{karimi2016linear}.
Hence, for every $x\in C$,
\[
y\in S(x)
\quad \Longleftrightarrow \quad
\nabla_y g(x,y)=0.
\]
Therefore, \eqref{eq:BP} is equivalent to the following gradient-based
reformulation:
\begin{equation}\label{eq:BP2}
	\min_{x,y}\ f(x,y)
	\quad \text{s.t.}\quad
	x\in C,\quad \nabla_y g(x,y)=0.
	\tag{GBP}
\end{equation}

A central difficulty in \eqref{eq:BP} is that the lower-level optimality
condition $y\in S(x)$ is implicit and typically set-valued. Under
Assumption~\ref{ass:PL}, this condition can be equivalently represented by the
gradient equation in \eqref{eq:BP2}. Nevertheless, \eqref{eq:BP2} is still a
nonlinear equality-constrained problem, and directly solving it would require
handling the constraint $\nabla_y g(x,y)=0$ and its associated Jacobian, which
involves second-order information of the lower-level objective. 
To avoid enforcing this equality constraint explicitly, we adopt an
exact-penalty reformulation. Motivated by the classical theory of nonsmooth
exact penalties and recent penalty-based first-order methods for bilevel
optimization \cite{ye1997exact,lu2024penalty,shen2025pbgd}, we penalize the gradient equation as follows
\[
p(x,y):=\|\nabla_y g(x,y)\|_1.
\]
For a penalty parameter $\gamma>0$, we consider the norm-gradient penalized
problem
\begin{equation}\label{eq:BP-pen}
	\min_{x,y}\ F_\gamma(x,y)
	:=
	f(x,y)+\gamma p(x,y)
	\quad \text{s.t.}\quad
	x\in C,\quad y\in\mathbb R^{d_y}. 
	\tag{NGP}
\end{equation}

To analyze the relation of \eqref{eq:BP2} and \eqref{eq:BP-pen}, we need the following assumptions and definitions. 

\begin{Assumption}
	\label{ass:Lip-f}
	There exists a constant $L>0$ such that for any $x\in C$, the mapping
	$y \mapsto f(x,y)$ is $L$-Lipschitz continuous on $\mathbb R^{d_y}$, i.e.,
	\[
	|f(x,y)-f(x,y')|
	\le L \|y-y'\|,
	\qquad \forall\, y,y'\in \mathbb R^{d_y}.
	\]
\end{Assumption}

\begin{Definition}
	A function $p:\mathbb{R}^{d_x}\times\mathbb{R}^{d_y}\to\mathbb{R}$ is a $\rho$-distance-bound if there exists $\rho>0$ such that for any $x\in C$ and $y\in \mathbb R^{d_y}$, it holds
	\begin{align}
		p(x,y) &\ge 0, \qquad 	d_{S(x)}(y)\le \rho\, p(x,y), \tag{a}\\
		p(x,y) &=0\iff d_{S(x)}(y)=0. \tag{b}
	\end{align}
\end{Definition}

Based on the above assumptions and definitions, we have the following results. 
\begin{Lemma}\label{lem:l1-distance-bound}
	Suppose Assumption~\ref{ass:PL} holds. 
	Then $p(x,y)=\|\nabla_y g(x,y)\|_1$ is a $\rho$-distance-bound function with $\rho := \sqrt{\mu}$. 
\end{Lemma}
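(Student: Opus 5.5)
The plan is to deduce the error bound $d_{S(x)}(y)\le \rho\,\|\nabla_y g(x,y)\|_2$ from the PL inequality of Assumption~\ref{ass:PL} by a gradient-flow (length-of-trajectory) argument. I would then pass to the $\ell_1$-norm using $\|\cdot\|_2\le\|\cdot\|_1$. Nonnegativity of $p$ is immediate, and property (b) is essentially the equivalence $y\in S(x)\iff\nabla_y g(x,y)=0$ already recorded after Assumption~\ref{ass:PL}. Concretely, $p(x,y)=0 \iff \nabla_y g(x,y)=0 \iff y\in S(x)$. Since $S(x)$ is closed, the last condition holds iff $d_{S(x)}(y)=0$. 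So the real content is (a).

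For (a), fix $x\in C$ and $y\notin S(x)$, write $a:=1/\sqrt{\mu}$, and set $h(u):=g(x,u)-v(x)\ge 0$. I would consider the gradient flow $\dot u(t)=-\nabla h(u(t))$ with $u(0)=y$, and the potential $\varphi(t):=\sqrt{h(u(t))}$. While $h>0$, the chain rule and PL ($\|\nabla h\|\ge \sqrt{a}\,\sqrt{h}$) give
\[
\dot\varphi=-\frac{\|\nabla h\|^2}{2\sqrt{h}}\le -\frac{\sqrt a}{2}\,\|\nabla h(u(t))\|=-\frac{\sqrt a}{2}\|\dot u(t)\|.
\]
Integrating, the trajectory has finite length, at most $\frac{2}{\sqrt a}\sqrt{h(y)}$. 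Hence $u(t)$ converges to some $\bar u$. Because PL also forces $h(u(t))\to 0$ (indeed $\frac{d}{dt}h\le -a h$), continuity gives $h(\bar u)=0$, i.e.\ $\bar u\in S(x)$. Therefore
\[
d_{S(x)}(y)\le\|y-\bar u\|\le \tfrac{2}{\sqrt a}\sqrt{h(y)}\le \tfrac{2}{\sqrt a}\cdot\tfrac{1}{\sqrt a}\|\nabla h(y)\|=\tfrac{2}{a}\|\nabla_y g(x,y)\|_2 .
\]
In the last step PL was used once more, in the form $\sqrt{h}\le\|\nabla h\|/\sqrt a$. Equivalently, one can cite the PL $\Rightarrow$ quadratic growth $\Rightarrow$ error bound chain of \cite{karimi2016linear}.

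The main obstacle, and the step I would check most carefully, is the constant. The argument above yields $\rho=2/a=2\sqrt{\mu}$, not $\sqrt{\mu}$. The factor $2$ appears to be genuine under the PL normalization as written (without the customary $\tfrac12$ on the left). For instance, $g(x,u)=\tfrac{c}{2}u^2$ in one dimension with $c=a/2$ satisfies Assumption~\ref{ass:PL} with equality. For it, $d_{S(x)}(y)=|y|=\tfrac{2}{a}|\nabla_y g|=2\sqrt{\mu}\,p(x,y)$; in one dimension the $\ell_1$- and $\ell_2$-norms coincide, so passing to $\ell_1$ cannot recover the factor. My plan is therefore as follows. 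I would prove (a) with $\rho=2\sqrt{\mu}$, or equivalently with $\rho=\sqrt{\mu}$ if the PL inequality is read as $\tfrac12\|\nabla_y g\|^2\ge \tfrac{1}{\sqrt\mu}(g-v)$. I would also flag that the stated constant requires this normalization. This does not affect the distance-bound property itself, nor the later exactness results, which only need some finite $\rho$.

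A secondary technical point is justifying global existence of the gradient flow and the chain rule, since $g(x,\cdot)$ is only $C^1$. If local Lipschitzness of $\nabla_y g$ is unavailable, I would use Peano existence together with the a priori length bound to extend solutions. Alternatively, I would simply invoke the PL $\Rightarrow$ error bound result of \cite{karimi2016linear} directly.
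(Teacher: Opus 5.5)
Your proof is correct, and you are right to doubt the constant. Nonnegativity and part (b) you handle exactly as the paper does. The difference is in part (a). The paper does not derive an error bound there: it quotes \cite[Lemma~2]{shen2025pbgd} for $d_{S(x)}(y)^2\le\mu\|\nabla_y g(x,y)\|_2^2$ under PL with constant $1/\sqrt{\mu}$, takes square roots, and applies $\|\cdot\|_2\le\|\cdot\|_1$.

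Your example $g(x,u)=\frac{1}{4\sqrt{\mu}}u^2$ satisfies Assumption~\ref{ass:PL} with equality, yet gives $d_{S(x)}(y)^2=y^2=4\mu\|\nabla_y g(x,y)\|_2^2$. So the quoted inequality, and with it the lemma with $\rho=\sqrt{\mu}$, fails for the PL inequality as literally written. The value $\sqrt{\mu}$ is recovered only if Assumption~\ref{ass:PL} carries the factor $\frac12$ on the left. Either the assumption or $\rho$ has to be adjusted, as you propose.

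Your trajectory-length argument (the PL $\Rightarrow$ quadratic growth $\Rightarrow$ error bound chain) gains two things over the paper's citation. First, it is self-contained and needs only $C^1$ regularity of $g(x,\cdot)$, together with the Peano existence and a priori length bound you mention. Your fallback of citing \cite{karimi2016linear} directly would instead need a Lipschitz gradient, which Assumption~\ref{ass:PL} does not provide, since their results are stated for Lipschitz-smooth functions. Second, it yields the correct constant $\rho=2\sqrt{\mu}$ under the stated normalization, and your example shows this constant is attained.

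One correction to your closing remark: the later exactness result is affected. Theorem~\ref{prop:l1-global} states the explicit threshold $\gamma>L\rho=L\sqrt{\mu}$, which under the literal normalization must become $\gamma>2L\sqrt{\mu}$. To see this, take your $g$ together with $f(x,y)=-L\phi(y)$, where $\phi$ is smooth and bounded with $|\phi'|\le 1$ and $\phi(y)=y$ near $0$. This $f$ satisfies Assumption~\ref{ass:Lip-f} with constant $L$. Whenever $\gamma<2L\sqrt{\mu}$, the function $F_\gamma$ is negative at small $y>0$ while $F_\gamma(x,0)=0$, and it is coercive in $y$. Hence for $L\sqrt{\mu}<\gamma<2L\sqrt{\mu}$, global minimizers of \eqref{eq:BP-pen} exist, and every one of them has $y\notin S(x)=\{0\}$.
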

\begin{proof}
The nonnegativity of $p$ is immediate. Since the lower-level problem is
unconstrained in $y$, for every fixed $x\in C$ we have
\[
y\in S(x)
\quad\Longrightarrow\quad
\nabla_y g(x,y)=0.
\]
Conversely, if $p(x,y)=0$, then $\nabla_y g(x,y)=0$. By the PL inequality,
\[
0=\|\nabla_y g(x,y)\|_2^2
\ge
\frac{1}{\sqrt{\mu}}\bigl(g(x,y)-v(x)\bigr).
\]
Since $g(x,y)\ge v(x)$, it follows that $g(x,y)=v(x)$, and hence
$y\in S(x)$. Therefore,
\[
p(x,y)=0
\quad\Longleftrightarrow\quad
d_{S(x)}(y)=0.
\]

Moreover, by \cite[Lemma 2]{shen2025pbgd}, under the PL condition with
constant $1/\sqrt{\mu}$, the function
$
\|\nabla_y g(x,y)\|_2^2 
$
is a $\mu$-squared-distance-bound function. Hence
\[
d_{S(x)}(y)^2
\le
\mu \|\nabla_y g(x,y)\|_2^2.
\]
Taking square roots gives
\[
d_{S(x)}(y)
\le
\sqrt{\mu}\|\nabla_y g(x,y)\|_2.
\]
Since $\|a\|_2\le \|a\|_1$ for any vector $a$, we obtain
\[
d_{S(x)}(y)
\le
\sqrt{\mu}\|\nabla_y g(x,y)\|_1
=
\rho p(x,y),
\]
with $\rho=\sqrt{\mu}$. Therefore, $p$ is a $\rho$-distance-bound function.
\end{proof}

\begin{Remark}
	The choice of 
	$
	p(x,y):=\|\nabla_y g(x,y)\|_1
	$ 
	is motivated by both exact-penalty theory and algorithmic tractability. 
	From the modeling viewpoint, $p(x,y)$ is a computable measure of the violation
	of the gradient equation in \eqref{eq:BP2}. Under Assumption~\ref{ass:PL}, the
	condition $p(x,y)=0$ is equivalent to $y\in S(x)$. 
	From the algorithmic viewpoint, the $\ell_1$-norm is particularly attractive. 
	Although it is nonsmooth, it admits a simple  dual representation, and after linearization this structure leads to a box-constrained quadratic subproblem in the dual space. 
	This dual geometry is central to our subsequent algorithmic development: it allows us to combine an exact-penalty reformulation with a prox-linear outer scheme and an efficient dual spectral projected gradient inner solver.
\end{Remark}

Now we can prove the exactness of the penalized problem \eqref{eq:BP-pen}. The following result clarifies the relation between minimizers of the problem \eqref{eq:BP} and minimizers of the penalized problem \eqref{eq:BP-pen}, and therefore provides the theoretical justification for our algorithmic framework.

\begin{Theorem}\label{prop:l1-global}
	Suppose Assumptions~\ref{ass:PL}-\ref{ass:Lip-f} hold,  $g(x, \cdot)$ is Lipschitz-smooth, and
	$p(x,y)=\|\nabla_y g(x,y)\|_1$ satisfies Lemma~\ref{lem:l1-distance-bound}
	with $\rho=\sqrt{\mu}$.
	For any $\gamma > L\rho = L\sqrt{\mu},$ the following arguments hold. 
\begin{itemize}
	\item [(i)] The global solutions of \eqref{eq:BP-pen} are the global solutions of \eqref{eq:BP}. 
	\item [(ii)] If $\|\nabla_y g(x,y)\|_1$ is convex in $y$, then the local solutions of \eqref{eq:BP-pen} are the local solutions of \eqref{eq:BP}. 
\end{itemize}
\end{Theorem}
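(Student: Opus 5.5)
The plan is to follow the standard exact-penalty argument of Clarke type, as in \cite{ye1997exact,shen2025pbgd}. The engine is a projection inequality. Fix $x\in C$ and $y\in\mathbb R^{d_y}$, and let $\hat y\in S(x)$ attain $d_{S(x)}(y)$; it exists because $S(x)$ is nonempty and closed. Assumption~\ref{ass:Lip-f} and Lemma~\ref{lem:l1-distance-bound} give
\begin{equation*}
f(x,\hat y)\le f(x,y)+L\,d_{S(x)}(y)\le f(x,y)+L\rho\,p(x,y).
\end{equation*}
Since $p(x,\hat y)=0$, this yields $F_\gamma(x,\hat y)\le F_\gamma(x,y)-(\gamma-L\rho)p(x,y)$. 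Everything below comes from this inequality with $\gamma-L\rho>0$.

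For (i), let $(x^*,y^*)$ be a global minimizer of \eqref{eq:BP-pen}. If $p(x^*,y^*)>0$, the inequality applied at $(x^*,y^*)$ produces a feasible point $(x^*,\hat y^*)$ with strictly smaller $F_\gamma$. This is a contradiction, so $p(x^*,y^*)=0$, and by part (b) of the distance-bound property $y^*\in S(x^*)$. Hence $(x^*,y^*)$ is feasible for \eqref{eq:BP}. For any feasible $(x,y)$ of \eqref{eq:BP} we have $p(x,y)=0$, so
\begin{equation*}
f(x^*,y^*)=F_\gamma(x^*,y^*)\le F_\gamma(x,y)=f(x,y).
\end{equation*}
This shows global optimality for \eqref{eq:BP}.

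For (ii), take a local minimizer $(x^*,y^*)$ of \eqref{eq:BP-pen} on a neighborhood $U$. The local version of the argument needs the projection $\hat y$ to remain in $U$, but $d_{S(x^*)}(y^*)$ need not be small. Convexity of $p(x^*,\cdot)$ fixes this. For $t\in(0,1]$, set $y_t=y^*+t(\hat y^*-y^*)$. Convexity gives $p(x^*,y_t)\le(1-t)p(x^*,y^*)$. Lipschitz continuity gives $f(x^*,y_t)\le f(x^*,y^*)+tL\,d_{S(x^*)}(y^*)\le f(x^*,y^*)+tL\rho\, p(x^*,y^*)$. Combining the two,
\begin{equation*}
F_\gamma(x^*,y_t)\le F_\gamma(x^*,y^*)-t(\gamma-L\rho)p(x^*,y^*).
\end{equation*}
For small $t$ the point $y_t$ lies in $U$, so local minimality forces $p(x^*,y^*)=0$, that is, $y^*\in S(x^*)$. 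Then, for feasible points of \eqref{eq:BP} in $U$, $F_\gamma=f$, and local optimality for \eqref{eq:BP} follows exactly as in (i).

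The main obstacle is the step in (ii) that moves toward the projection, and it is the only place convexity enters. The Lipschitz-smoothness of $g(x,\cdot)$ serves only to make $p$ continuous, so $F_\gamma$ is well behaved, and it is not needed in the estimate itself. I would also check that the neighborhood in the definition of a local solution of \eqref{eq:BP} is taken relative to the feasible set. With that convention the last step is immediate.
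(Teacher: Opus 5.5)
Your proof is correct and follows the same route as the paper. The paper uses Lemma~\ref{lem:l1-distance-bound} to certify that $p$ is a $\rho$-distance-bound function and then invokes \cite[Proposition~6]{shen2025pbgd}; you have written out what that citation encapsulates, namely the projection inequality $F_\gamma(x,\hat y)\le F_\gamma(x,y)-(\gamma-L\rho)\,p(x,y)$ and the convex-combination step toward $\hat y$.

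One caveat applies equally to the paper: the bound $d_{S(x)}(y)\le\sqrt{\mu}\,p(x,y)$ must be read as a hypothesis of the theorem, because it does not follow from Assumption~\ref{ass:PL} as normalized there. For $g=\frac12 y^2$ with $\sqrt{\mu}=\frac12$, the PL inequality holds with equality, yet $d_{S}(y)=|y|=2\sqrt{\mu}\,p(y)$. So the sharp constant is $2\sqrt{\mu}$, and deriving the bound from PL gives the threshold $\gamma>2L\sqrt{\mu}$ rather than $L\sqrt{\mu}$.
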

%
\begin{proof}
By Lemma~\ref{lem:l1-distance-bound}, $p(x,y)$ is a $\rho$-distance-bound function.
Hence the general exact-penalty statement in \cite[Proposition~6]{shen2025pbgd} 
applies with $\gamma>\gamma^{\ast}=L\rho$.
\end{proof}

\begin{Remark}
	\label{rem:NW-exact-penalty}
	The penalized problem \eqref{eq:BP-pen} can also be viewed as the classical
	$\ell_1$ exact-penalty reformulation of the equality-constrained problem
	\eqref{eq:BP2}. 
	If \eqref{eq:BP2} is locally in the
	standard nonlinear programming form around a strict local solution
	$(\bar x,\bar y)$, and if the first-order necessary conditions hold with a
	Lagrange multiplier $\eta^\ast\in\mathbb R^{d_y}$ associated with the equality
	constraint $\nabla_y g(x,y)=0$, then
	Nocedal and Wright~\cite[Theorem~17.3]{nocedal2006numerical} imply that
	$(\bar x,\bar y)$ is a local minimizer of \eqref{eq:BP-pen} whenever
	$
	\gamma>\|\eta^\ast\|_\infty. 
	$
	In addition, if the second-order sufficient conditions hold at
	$(\bar x,\bar y)$, then $(\bar x,\bar y)$ is a strict local minimizer of
	\eqref{eq:BP-pen}.
	
\end{Remark}

The details of the Exact-Penalty Method are given in Algorithm \ref{alg:outer-continuation-0}. 

\begin{algorithm}[H]
	\caption{Exact-Penalty Method for \eqref{eq:BP-pen}}\label{alg:outer-continuation-0}
	\begin{algorithmic}[1]
		
		\State \textbf{Input:} $(x^0,y^0)$ and $\gamma_0>0$, $\tau>1$, $\varepsilon_{f}>0$, $\varepsilon_{s}>0$. 
		
		\State Set $k:=0$.
		
		\State \textbf{while} 	$r_{f}^{k}> \varepsilon_{f}\ \text{or}\ 
		r_{s}^{k}> \varepsilon_{s}$, \textbf{do}
		
		\State \quad Solve 
		\begin{equation}\label{eq:EP_gammak_0}
				\min_{x\in C,y\in\mathbb R^{d_y}}\ F_{\gamma_k}(x,y)
			:=
			f(x,y)+\gamma_k p(x,y). 
		\end{equation}
		\quad to get $(x^{k+1},y^{k+1})$.
		
		\State \quad Set 
		$\gamma_{k+1}:=\tau \gamma_{k}$ and $k:=k+1$
		
		\State \textbf{end while}
		
		\State \textbf{Output:} $(\bar x,\bar y)=(x^{k},y^{k})$.
		
	\end{algorithmic}
\end{algorithm}

We terminate Exact-Penalty Method by combining a feasibility residual for the gradient equation
in \eqref{eq:BP2} and a stationarity residual. Specifically, after
obtaining $(x^{k+1},y^{k+1})$, we compute
\begin{equation}\label{eq:EPPL-residuals}
	r_{f}^{k+1}
	:=
	\|\nabla_y g(x^{k+1},y^{k+1})\|_1,
	\qquad
	r_{s}^{k+1}
	:=
	\frac{1}{\lambda_k}\|(x^{k+1},y^{k+1})-(x^{k},y^{k})\|_2 .
\end{equation}
The first residual measures the violation of the equality constraint
$\nabla_y g(x,y)=0$ in \eqref{eq:BP2}, while the second residual measures the
stationarity of the step. The algorithm stops once
\begin{equation}\label{eq:EPPL-stop}
	r_{f}^{k+1}\le \varepsilon_{f},
	\qquad
	r_{s}^{k+1}\le \varepsilon_{s}.
\end{equation}
If \eqref{eq:EPPL-stop} is not satisfied, the penalty parameter is increased. 

\section{Solving the Penalized Problem }\label{sec:proxlinear}

In this section, we develop an algorithmic framework for solving the penalized problem \eqref{eq:BP-pen}. 
Our goal is to show how the special structure of the exact-penalty objective can be exploited algorithmically. 
In particular, we adapt the prox-linear framework of \cite{drusvyatskiy2019efficiency} to our bilevel penalty formulation and then derive a problem-specific subproblem that will be further exploited in the later sections.

Solving \eqref{eq:BP-pen} is challenging because it contains the nonsmooth penalty term $\|\nabla_y g(x,y)\|_1$, which arises from the nonsmoothness of the $\ell_1$-norm. Although one may smooth this term by using, for example, the Moreau envelope or zeroth-order approximations, such smoothing techniques may deteriorate the convergence rate. Motivated by the simple structure of the $\ell_1$-norm, we instead solve \eqref{eq:BP-pen} via a prox-linear approach.

We first recall the generic prox-linear framework introduced in \cite{drusvyatskiy2019efficiency}. 
The prox-linear method is designed for composite nonsmooth optimization problems of the form
\begin{equation}\label{eq:generic-proxlinear}
	\min_{z\in \mathcal{Z}} \; c_1(z) + c_3(c_2(z)),
\end{equation}
where $\mathcal{Z}$ is a closed convex set, $c_1:\mathbb{R}^m\to\mathbb{R}$ and $c_2:\mathbb{R}^m\to\mathbb{R}^n$ are Lipschitz smooth, and $c_3:\mathbb{R}^n\to\mathbb{R}$ is convex but possibly nonsmooth. The idea of prox-linear method is to linearize both $c_1(z)$ and $c_2(z)$ at a given iterate to solve the resulting subproblem.

To be specific, we specialize the framework of \cite{drusvyatskiy2019efficiency} to the norm-gradient penalized problem \eqref{eq:BP-pen}. 
For this problem, we denote $z=(x,y)$ and set
\[
c_1(z)=f(x,y),\qquad
c_2(z)=\nabla_y g(x,y),\qquad
c_3(w)=\gamma\|w\|_1.
\]
This specialization is nontrivial for two reasons. Firstly, the nonsmooth term $c_3(\cdot)$ is not an arbitrary composite penalty, but the $\ell_1$-norm of the lower-level gradient, whose exact-penalty role was justified in Section~\ref{sec:penalty}. Secondly, this particular structure will later allow us, in the simple bilevel setting, to derive an explicit dual reformulation of the subproblem as a box-constrained quadratic program (See Section \ref{sec:dual-subproblem}).

Let $\mathcal Z:=C\times\mathbb R^{d_y}$. To solve \eqref{eq:EP_gammak_0}, we apply the prox-linear method with the
penalty parameter $\gamma_k$ fixed. More precisely, at the $k$-th penalty
stage, we initialize
$
z^{k,0}:=z^k. 
$
Then, for the fixed $\gamma_k$, we perform several prox-linear
iterations indexed by $j=0,1,2,\ldots$. At the $j$-th prox-linear iteration,
we linearize both $f$ and $\nabla_y g$ at
$
z^{k,j}=(x^{k,j},y^{k,j}). 
$

The prox-linear model of $F_{\gamma_k}$ is given by
\begin{equation}\label{BP-P-sub}
	\begin{aligned}
		\mathcal L_{\gamma_k,\lambda}(z;z^{k,j})
		:=
		&\, f(z^{k,j})
		+
		\nabla f(z^{k,j})^\top (z-z^{k,j}) 
		+
		\gamma_k
		\left\|
		\nabla_y g(z^{k,j})
		+
		\nabla(\nabla_y g)(z^{k,j})^\top
		(z-z^{k,j})
		\right\|_1  \\
		&+
		\frac{1}{2\lambda}\|z-z^{k,j}\|_2^2 .
	\end{aligned}
\end{equation}
Here,
\[
\nabla(\nabla_y g)(z^{k,j})
:=
\begin{bmatrix}
	\nabla_{xy}g(x^{k,j},y^{k,j})\\
	\nabla_{yy}g(x^{k,j},y^{k,j})
\end{bmatrix}.
\]
The next prox-linear iterate is obtained by solving
\begin{equation}\label{eq:exact-model-solution}
	z^{k,j+1}
	\in
	\arg\min_{z\in\mathcal Z}
	\mathcal L_{\gamma_k,\lambda}(z;z^{k,j}).
\end{equation}
For fixed $\gamma_k$, the prox-linear loop is terminated when
\begin{equation}\label{eq:fixed-gamma-stop-general}
	r_s^{k,j+1}
	:=
	\frac{1}{\lambda}
	\|z^{k,j+1}-z^{k,j}\|_2
	\leq
	\epsilon_s^{\rm in}. 
\end{equation}
Let $J_k$ be the final prox-linear index at the $k$-th penalty stage. We then
set
\begin{equation}\label{eq:outer-update-from-inner-general}
	z^{k+1}:=z^{k,J_k}.
\end{equation}

Thus, the overall idea is as follows. We borrow the general prox-linear framework from \cite{drusvyatskiy2019efficiency}, but apply it to the norm-gradient penalized problem \eqref{eq:BP-pen}. 
The special structure of this subproblem, especially in the simple bilevel case, is what distinguishes our method from a generic prox-linear scheme and leads to the dual reformulation and SPG subsolver developed later.

We emphasize that our contribution is to borrow the prox-linear algorithm to the norm-gradient penalized problem \eqref{eq:BP-pen},  and to exploit the specific $\ell_1$ norm-gradient structure in the subsequent algorithm design. 
The resulting algorithm will be referred to as the exact-penalty prox-linear method (EPPL).

The details of EPPL are given in Algorithm~\ref{alg:eep-fixed-gamma}.

\begin{algorithm}[H]
	\caption{EPPL for \eqref{eq:EP_gammak_0}}
	\label{alg:eep-fixed-gamma}
	\begin{algorithmic}[1]
		
		\State \textbf{Input:}  $\gamma_k$, 
		$z^{k,0}:=z^k$, $\lambda>0$, 
		$\epsilon_s^{\rm in}>0$. 
		
		\State Set $j:=0$.
		
		\State \textbf{while} 
		$r_s^{k,j}>\epsilon_s^{\rm in}$, \textbf{do}
		
		\State \quad Compute $	\nabla f(z^{k,j}),\ 
		\nabla_y g(z^{k,j}),\ 
		\nabla(\nabla_y g)(z^{k,j})$.
		
		\State \quad Solve the prox-linear subproblem \eqref{BP-P-sub}
		to get $z^{k,j+1}$. 
		
		
		\State \quad Set $j:=j+1$.
		
		\State \textbf{end while}
		
		\State Set $J_k:=j$ and $z^{k+1}:=z^{k,J_k}$.
		
		\State \textbf{Output:} $z^{k+1}$.
		
	\end{algorithmic}
\end{algorithm}

\subsection{Convergence Analysis of the  EPPL Method}
In this subsection, we analyze the convergence behavior of the prox-linear
iterations under a fixed penalty parameter. More precisely, we fix an
arbitrary penalty stage $\bar k$ in the outer exact-penalty continuation and
set
$
\gamma:=\gamma_{\bar k}. 
$

The update of the penalty parameter is not considered in this subsection.
Instead, we study the fixed-penalty prox-linear loop in
Algorithm~\ref{alg:eep-fixed-gamma}. To avoid double indices, we write
$
z^j:=z^{\bar k,j},
\ 
z^{j+1}:=z^{\bar k,j+1}, 
$
and denote the fixed penalized objective simply by
$
F_\gamma(z):=f(z)+\gamma p(z). 
$
Thus, all estimates below are understood for the fixed value
$\gamma=\gamma_{\bar k}$.

We first introduce the following regularity assumption commonly made in the convergence analysis of the gradient-based bilevel optimization methods.

\begin{Assumption}
	\label{ass:smooth}
	There exist constants $L_f>0$, $L_g>0$, and $L_{g,2}>0$ such that the
	following conditions hold:
	\begin{enumerate}
			\item[(i)]
			The upper-level objective $f(x,y)$ is $L_f$-Lipschitz continuously
			differentiable on $C\times \mathbb R^{d_y}$, i.e.,
			\[
			\|\nabla f(z)-\nabla f(z')\|
			\le L_f \|z-z'\|,
			\qquad \forall\, z,z'\in C\times \mathbb R^{d_y}.
			\]		
			\item[(ii)]
			The lower-level objective $g(x,y)$ is $L_g$-Lipschitz smooth with
			respect to $(x,y)$ on $C\times \mathbb R^{d_y}$, i.e.,
			\[
			\|\nabla g(z)-\nabla g(z')\|
			\le L_g \|z-z'\|,
			\qquad \forall\, z,z'\in C\times \mathbb R^{d_y}.
			\]		
			\item[(iii)]
			The gradient mapping $\nabla_y g(x,y)$ is $L_{g,2}$-Lipschitz smooth
			with respect to $(x,y)$ on $C\times \mathbb R^{d_y}$, i.e.,
			\[
			\|\nabla (\nabla_y g)(z)-\nabla (\nabla_y g)(z')\|
			\le L_{g,2} \|z-z'\|,
			\qquad \forall\, z,z'\in C\times \mathbb R^{d_y}.
			\]
		\end{enumerate}
\end{Assumption}

For the fixed penalty parameter
$\gamma>0$, the prox-linear step at the $j$-th inner iteration is
defined by
\begin{equation}\label{eq:exact-model-solution}
	\bar z^{j+1}
	=
	\arg\min_{z\in\mathcal Z}
	\mathcal L_{\gamma,\lambda}(z;z^j).
\end{equation}

The commonly used stationarity metric in nonsmooth composite optimization
\cite{drusvyatskiy2019efficiency} is the prox-gradient mapping associated with
the prox-linear model. In the present fixed-$\gamma$ setting, for
$\lambda>0$, we define
\begin{equation}\label{eq:G}
	\mathcal G_{\gamma,\lambda}(z^j)
	:=
	\lambda^{-1}
	\left(
	z^j
	-
	\arg\min_{z\in\mathcal Z}
	\mathcal L_{\gamma,\lambda}(z;z^j)
	\right)
	=
	\lambda^{-1}(z^j-\bar z^{j+1}).
\end{equation}

Our analysis follows the standard prox-linear paradigm in nonsmooth composite optimization. 
More precisely, we first quantify the approximation error between the true penalized objective and its local model, then show that each prox-linear step yields sufficient decrease, and finally combine these two ingredients to derive a sublinear bound on the prox-gradient mapping.

\begin{Remark}
	The convergence analysis below concerns the fixed-penalty prox-linear loop.
	The outer continuation strategy is used to increase the penalty parameter
	until a sufficiently large value is reached, while the stationarity estimate
	below explains the behavior of the prox-linear iterations once $\gamma$ is
	fixed.
\end{Remark}

The next lemma shows that $\mathcal L(\cdot;w)$ is a second-order accurate approximation of the $F_\gamma(\cdot)$ around the reference point $w$.

\begin{Lemma}\label{lem:model-error}
	Let $\mathcal Z:=C\times\mathbb R^{d_y}$ and suppose that
	Assumption~\ref{ass:smooth} holds. Define
	\begin{equation}\label{eq:L_gamma}
	L_\gamma:=L_f+\gamma\sqrt{d_y}L_{g,2}.
  \end{equation}
	Then, for any $z,w\in\mathcal Z$, it holds that
	\begin{equation}\label{eq:model-error-corrected}
		-\frac{L_\gamma+\lambda^{-1}}{2}\|z-w\|^2
		\le
		F_\gamma(z)-\mathcal L(z;w)
		\le
		\frac{L_\gamma-\lambda^{-1}}{2}\|z-w\|^2.
	\end{equation}
\end{Lemma}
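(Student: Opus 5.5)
The plan is to split $F_\gamma(z)-\mathcal L(z;w)$ into a smooth part, a penalty part, and the proximal term. By \eqref{BP-P-sub} (with $\gamma$ in place of $\gamma_k$ and reference point $w$),
\[
F_\gamma(z)-\mathcal L(z;w)
=\underbrace{f(z)-f(w)-\nabla f(w)^\top(z-w)}_{=:T_1}
+\gamma\underbrace{\Bigl(\|\nabla_y g(z)\|_1-\|\nabla_y g(w)+\nabla(\nabla_y g)(w)^\top(z-w)\|_1\Bigr)}_{=:T_2}
-\frac{1}{2\lambda}\|z-w\|^2 .
\]
Once we have $|T_1|\le \frac{L_f}{2}\|z-w\|^2$ and $|T_2|\le \frac{\sqrt{d_y}L_{g,2}}{2}\|z-w\|^2$, both inequalities in \eqref{eq:model-error-corrected} follow by adding these bounds with the proximal term. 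The lower bound comes from $-\frac{L_f}{2}-\frac{\gamma\sqrt{d_y}L_{g,2}}{2}-\frac{1}{2\lambda}=-\frac{L_\gamma+\lambda^{-1}}{2}$, and the upper bound from $\frac{L_\gamma}{2}-\frac{1}{2\lambda}$, using the definition \eqref{eq:L_gamma}.

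For $T_1$ we use the standard descent lemma. Since $\mathcal Z=C\times\mathbb R^{d_y}$ is convex, the segment $w+t(z-w)$, $t\in[0,1]$, stays in $\mathcal Z$. Then
\[
T_1=\int_0^1(\nabla f(w+t(z-w))-\nabla f(w))^\top(z-w)\,dt,
\]
and Assumption~\ref{ass:smooth}(i) gives $|T_1|\le L_f\|z-w\|^2\int_0^1 t\,dt=\frac{L_f}{2}\|z-w\|^2$.

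For $T_2$ we first apply the reverse triangle inequality for $\|\cdot\|_1$. This gives $|T_2|\le \|r\|_1$, where $r:=\nabla_y g(z)-\nabla_y g(w)-\nabla(\nabla_y g)(w)^\top(z-w)\in\mathbb R^{d_y}$. Next we use $\|r\|_1\le\sqrt{d_y}\|r\|_2$, which is where the factor $\sqrt{d_y}$ in $L_\gamma$ originates. Finally we bound $\|r\|_2$ by the same integral argument applied to the vector map $\nabla_y g$:
\[
r=\int_0^1\bigl(\nabla(\nabla_y g)(w+t(z-w))-\nabla(\nabla_y g)(w)\bigr)^\top(z-w)\,dt .
\]
Assumption~\ref{ass:smooth}(iii), taken in the operator norm, then yields $\|r\|_2\le \frac{L_{g,2}}{2}\|z-w\|^2$.

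This step is the only delicate point, in two respects. First, the integral representation of $r$ requires $\nabla_y g$ to be continuously differentiable along the segment; this is implicit in Assumption~\ref{ass:smooth}(iii). Second, the norm in (iii) must be read as the spectral norm, or as any norm dominating it such as the Frobenius norm, so that $\|A^\top v\|\le\|A\|\|v\|$ holds. With these readings the argument closes, and the combination described in the first paragraph completes the proof.
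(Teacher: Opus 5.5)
Your proposal is correct and follows essentially the same route as the paper. Both arguments bound the linearization error of $f$ by $\frac{L_f}{2}\|z-w\|^2$, control the penalty term via the reverse triangle inequality, the Taylor-remainder bound from Assumption~\ref{ass:smooth}(iii), and $\|\cdot\|_1\le\sqrt{d_y}\|\cdot\|_2$, and then subtract the proximal term. Your integral justification of the remainder bound (using convexity of $\mathcal Z$) and your operator-norm reading of (iii) spell out what the paper takes for granted.
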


\begin{proof}
We estimate separately the approximation errors of the smooth term $f$ and the
composite penalty term.
Since $f$ is $L_f$-smooth, we have
\[
-\frac{L_f}{2}\|z-w\|^2
\le
f(z)-\bigl(f(w)+\nabla f(w)^\top(z-w)\bigr)
\le
\frac{L_f}{2}\|z-w\|^2.
\]
Let
$
h(z):=\nabla_y g(z),
\ 
J(w):=\nabla h(w)=\nabla(\nabla_y g(w)).
$
Then
\[
\bigl|\|h(z)\|_1-\|h(w)+J(w)^\top(z-w)\|_1\bigr|
\le
\|h(z)-h(w)-J(w)^\top(z-w)\|_1.
\]
By Assumption~\ref{ass:smooth}(iii), the Taylor remainder satisfies
\[
\|h(z)-h(w)-J(w)^\top(z-w)\|_2
\le
\frac{L_{g,2}}{2}\|z-w\|^2.
\]
Therefore,
\[
\|h(z)-h(w)-J(w)^\top(z-w)\|_1
\le
\sqrt{d_y}\|h(z)-h(w)-J(w)^\top(z-w)\|_2
\le
\frac{\sqrt{d_y}L_{g,2}}{2}\|z-w\|^2.
\]
It follows that
\[
-\frac{\gamma\sqrt{d_y}L_{g,2}}{2}\|z-w\|^2
\le
\gamma\|h(z)\|_1
-
\gamma\|h(w)+J(w)^\top(z-w)\|_1
\le
\frac{\gamma\sqrt{d_y}L_{g,2}}{2}\|z-w\|^2.
\]
Combining the estimates for $f$ and the penalty term, and recalling that
$\mathcal L(z;w)$ contains the proximal term
$
\frac{1}{2\lambda}\|z-w\|^2, 
$
we obtain
\[
-\frac{L_\gamma}{2}\|z-w\|^2
\le
F_\gamma(z)-\mathcal L(z;w)+\frac{1}{2\lambda}\|z-w\|^2
\le
\frac{L_\gamma}{2}\|z-w\|^2.
\]
Rearranging gives \eqref{eq:model-error-corrected}.
\end{proof}

The next lemma is the key descent estimate for the EPPL method. It shows that, despite solving the subproblem only approximately, each iteration still decreases the penalized objective up to the controllable error $\delta_k$, and it will serve as the main ingredient in the convergence theorem below.
\begin{Lemma}\label{lem:descent-inexact}
	Suppose Assumption~\ref{ass:smooth} holds and
	$\lambda \le \frac{1}{L_\gamma}. $
	Then the iterates generated by Algorithm~\ref{alg:eep-fixed-gamma} satisfy
	\begin{equation}\label{eq:descent-inexact}
		F_\gamma(z^j)
		\ge
		F_\gamma(z^{j+1})
		-\delta_j
		+
		\frac{\lambda}{2}\|\mathcal G_{\gamma,\lambda}(z^j)\|^2.
	\end{equation}
\end{Lemma}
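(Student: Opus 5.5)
The plan is to run the standard three-step prox-linear descent argument, with the inexactness absorbed into $\delta_j$. I interpret $\delta_j\ge 0$ as the optimality gap of the computed iterate in the subproblem, namely
\[
\mathcal L_{\gamma,\lambda}(z^{j+1};z^j)\le \mathcal L_{\gamma,\lambda}(\bar z^{j+1};z^j)+\delta_j ,
\]
where $\bar z^{j+1}$ is the exact minimizer in \eqref{eq:exact-model-solution}. This is the natural meaning given the remark preceding the lemma. With $\delta_j=0$ the argument reduces to the exact case.

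\textbf{Step 1 (model majorizes $F_\gamma$).} Apply the right-hand inequality of Lemma~\ref{lem:model-error} with $z=z^{j+1}$ and $w=z^j$. Since $\lambda\le 1/L_\gamma$, the coefficient $\frac{L_\gamma-\lambda^{-1}}{2}$ is nonpositive, so
\[
F_\gamma(z^{j+1})\le \mathcal L_{\gamma,\lambda}(z^{j+1};z^j).
\]
Combining this with the definition of $\delta_j$ gives
\[
F_\gamma(z^{j+1})\le \mathcal L_{\gamma,\lambda}(\bar z^{j+1};z^j)+\delta_j .
\]

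\textbf{Step 2 (strong convexity of the model).} The model $\mathcal L_{\gamma,\lambda}(\cdot;z^j)$ is the sum of three pieces:
\begin{itemize}
\item an affine function;
\item $\gamma$ times the $\ell_1$-norm of an affine map, which is convex;
\item the term $\frac{1}{2\lambda}\|z-z^j\|^2$.
\end{itemize}
Hence it is $\lambda^{-1}$-strongly convex on the closed convex set $\mathcal Z$. Its minimizer over $\mathcal Z$ is $\bar z^{j+1}$, so the quadratic growth inequality evaluated at $z^j\in\mathcal Z$ gives
\[
\mathcal L_{\gamma,\lambda}(z^j;z^j)\ge \mathcal L_{\gamma,\lambda}(\bar z^{j+1};z^j)+\frac{1}{2\lambda}\|z^j-\bar z^{j+1}\|^2 .
\]
This growth inequality follows from the optimality condition $0\in\partial \mathcal L+N_{\mathcal Z}(\bar z^{j+1})$ combined with strong convexity.

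\textbf{Step 3 (identify the terms).} First, $\mathcal L_{\gamma,\lambda}(z^j;z^j)=f(z^j)+\gamma\|\nabla_y g(z^j)\|_1=F_\gamma(z^j)$, since the linear and proximal terms vanish at $z=z^j$. Second, by \eqref{eq:G}, $\frac{1}{2\lambda}\|z^j-\bar z^{j+1}\|^2=\frac{\lambda}{2}\|\mathcal G_{\gamma,\lambda}(z^j)\|^2$. Chaining Steps 1 and 2 then yields
\[
F_\gamma(z^j)\ge F_\gamma(z^{j+1})-\delta_j+\frac{\lambda}{2}\|\mathcal G_{\gamma,\lambda}(z^j)\|^2 ,
\]
which is \eqref{eq:descent-inexact}.

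The only delicate point is Step 2. One must use the growth bound of the strongly convex model at its constrained minimizer, rather than a plain gradient inequality, because the model is nonsmooth and is minimized over $\mathcal Z$. One must also make sure the gradient mapping is defined via the exact minimizer $\bar z^{j+1}$, not the inexact iterate $z^{j+1}$. The rest is bookkeeping.
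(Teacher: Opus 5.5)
Your proposal is correct and matches the paper's proof step for step. Both use the $\lambda^{-1}$-strong-convexity growth bound at the exact minimizer $\bar z^{j+1}$, then the $\delta_j$-optimality gap to pass to the computed iterate $z^{j+1}$, and then the majorization $F_\gamma(z^{j+1})\le\mathcal L_{\gamma,\lambda}(z^{j+1};z^j)$ from Lemma~\ref{lem:model-error} under $\lambda\le 1/L_\gamma$; your reading of $\delta_j$ is exactly the one the paper uses implicitly, and, as in the paper, applying Lemma~\ref{lem:model-error} tacitly requires the inexact iterate $z^{j+1}$ to be feasible, i.e.\ to lie in $\mathcal Z$.
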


\begin{proof}
Since $\mathcal L(\cdot;z^j)$ is $(1/\lambda)$-strongly convex on $\mathcal Z$, and $\bar z^{j+1}$ is its minimizer over $\mathcal Z$, we have
\[
\mathcal L(z^j;z^j)
\ge
\mathcal L(\bar z^{j+1};z^j)
+
\frac{1}{2\lambda}\|z^j-\bar z^{j+1}\|^2.
\]
Recalling that $\mathcal L(z^j;z^j)=F_\gamma(z^j)$ and using \eqref{eq:G}, this becomes
\begin{equation}\label{eq:strong-convex-step}
	F_\gamma(z^j)
	\ge
	\mathcal L(\bar z^{j+1};z^j)
	+
	\frac{\lambda}{2}\|\mathcal G_{\gamma,\lambda}(z^j)\|^2.
\end{equation}

Next, by the definition of  $\delta_j$-approximate solution, 
\[
\mathcal L(\bar z^{j+1};z^j)
\ge
\mathcal L(z^{j+1};z^j)-\delta_j.
\]
Substituting this into \eqref{eq:strong-convex-step} yields
\[
F_\gamma(z^j)
\ge
\mathcal L(z^{j+1};z^j)-\delta_j
+
\frac{\lambda}{2}\|\mathcal G_{\gamma,\lambda}(z^j)\|^2.
\]

Finally, by Lemma~\ref{lem:model-error} in the rearranged form, with $z=z^{j+1}$ and $w=z^j$, and using the stepsize condition $\lambda^{-1}\ge L_\gamma$, we obtain
\[
F_\gamma(z^{j+1})\le \mathcal L(z^{j+1};z^j).
\]
Combining the last two inequalities proves \eqref{eq:descent-inexact}.
\end{proof}

Now we can derive the main convergence guarantee for Algorithm~\ref{alg:eep-fixed-gamma}. 

\begin{Theorem}\label{thm:conv-inexact}
	Suppose Assumption~\ref{ass:smooth} holds and $\lambda$ satisfies $\lambda \le \frac{1}{L_\gamma}$. Let
	$
	F_\gamma^{\inf}:=\inf_{z\in\mathcal Z}F_\gamma(z)>-\infty. 
	$
	Then the iterates $\{z^j\}$ generated by Algorithm~\ref{alg:eep-fixed-gamma} satisfy
	\begin{equation}\label{eq:rate-inexact}
		\min_{j=0,\dots,J-1}\|\mathcal G_{\gamma,\lambda}(z^j)\|^2
		\le
		\frac{2\Big(F_\gamma(z^0)-F_\gamma^{\inf}+\sum_{j=0}^{J-1}\delta_j\Big)}{\lambda J}.
	\end{equation}
	In particular, if
	$
	\sum_{j=0}^{\infty}\delta_j<\infty,
	$
	then
	$
	\min_{j=0,\dots,J-1}\|\mathcal G_{\gamma,\lambda}(z^j)\|
	=
	\mathcal O(J^{-1/2}).
	$
\end{Theorem}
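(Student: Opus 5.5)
The plan is to telescope the descent estimate of Lemma~\ref{lem:descent-inexact}. Since Assumption~\ref{ass:smooth} holds and $\lambda\le 1/L_\gamma$, that lemma gives, for every inner index $j\ge 0$,
\[
\frac{\lambda}{2}\|\mathcal G_{\gamma,\lambda}(z^j)\|^2
\le
F_\gamma(z^j)-F_\gamma(z^{j+1})+\delta_j .
\]
I would sum this over $j=0,\dots,J-1$. The differences $F_\gamma(z^j)-F_\gamma(z^{j+1})$ telescope to $F_\gamma(z^0)-F_\gamma(z^J)$, so
\[
\frac{\lambda}{2}\sum_{j=0}^{J-1}\|\mathcal G_{\gamma,\lambda}(z^j)\|^2
\le
F_\gamma(z^0)-F_\gamma(z^J)+\sum_{j=0}^{J-1}\delta_j .
\]

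Next I would bound $F_\gamma(z^J)$ from below by $F_\gamma^{\inf}$, which is finite by hypothesis. This requires $z^J\in\mathcal Z$. That holds because every iterate is a (possibly approximate) minimizer of $\mathcal L_{\gamma,\lambda}(\cdot;z^{j})$ over $\mathcal Z=C\times\mathbb R^{d_y}$, and $z^0=z^{\bar k}$ lies in $\mathcal Z$. I would also note that the $\delta_j$-approximate solutions are taken to be feasible, since this is implicitly used in Lemma~\ref{lem:descent-inexact}. The left-hand side is at least $\frac{\lambda J}{2}\min_{0\le j\le J-1}\|\mathcal G_{\gamma,\lambda}(z^j)\|^2$, because the minimum is bounded by the average. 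Dividing by $\lambda J/2$ then gives \eqref{eq:rate-inexact}.

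For the final claim, if $\sum_j\delta_j=:\Delta<\infty$, then the right-hand side of \eqref{eq:rate-inexact} is at most $2(F_\gamma(z^0)-F_\gamma^{\inf}+\Delta)/(\lambda J)$. This is $\mathcal O(1/J)$ with a constant independent of $J$, and taking square roots gives the $\mathcal O(J^{-1/2})$ rate for $\min_j\|\mathcal G_{\gamma,\lambda}(z^j)\|$.

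The argument is essentially routine. The only delicate point is bookkeeping: the exact prox-linear point $\bar z^{j+1}$ defines $\mathcal G_{\gamma,\lambda}(z^j)$, while the actual iterate $z^{j+1}$ is the $\delta_j$-approximate one. Lemma~\ref{lem:descent-inexact} already reconciles these two points, so the telescoping must be done along the actual iterates $z^j$. Once that is respected, no further obstacle arises.
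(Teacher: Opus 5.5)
Your proposal is correct and follows the paper's proof essentially step for step. You sum the descent inequality of Lemma~\ref{lem:descent-inexact}, telescope, bound $F_\gamma(z^J)$ below by $F_\gamma^{\inf}$, use that the minimum is at most the average, and divide by $\lambda J/2$; your remark that the approximate iterates must stay in $\mathcal Z$ for the bound $F_\gamma(z^J)\ge F_\gamma^{\inf}$ is sensible bookkeeping that the paper leaves implicit.
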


\begin{proof}
Summing \eqref{eq:descent-inexact} over $j=0,\dots,J-1$ yields
\[
\sum_{j=0}^{J-1}\bigl(F_\gamma(z^j)-F_\gamma(z^{j+1})\bigr)
\ge
\sum_{j=0}^{J-1}\left(
-\delta_j+\frac{\lambda}{2}\|\mathcal G_{\gamma,\lambda}(z^j)\|^2
\right).
\]
Since the left-hand side telescopes, we obtain
\[
F_\gamma(z^0)-F_\gamma(z^J)
\ge
-\sum_{j=0}^{J-1}\delta_j
+
\frac{\lambda}{2}\sum_{j=0}^{J-1}\|\mathcal G_{\gamma,\lambda}(z^j)\|^2.
\]
Rearranging terms gives
\[
\frac{\lambda}{2}\sum_{j=0}^{J-1}\|\mathcal G_{\gamma,\lambda}(z^j)\|^2
\le
F_\gamma(z^0)-F_\gamma(z^J)+\sum_{j=0}^{J-1}\delta_j.
\]
Using the lower bound $F_\gamma(z^J)\ge F_\gamma^{\inf}$, we further have
\[
\frac{\lambda}{2}\sum_{j=0}^{J-1}\|\mathcal G_{\gamma,\lambda}(z^j)\|^2
\le
F_\gamma(z^0)-F_\gamma^{\inf}+\sum_{j=0}^{J-1}\delta_j.
\]
Dividing both sides by $\lambda J/2$ and using
\[
\min_{j=0,\dots,J-1}\|\mathcal G_{\gamma,\lambda}(z^j)\|^2
\le
\frac{1}{J}\sum_{j=0}^{J-1}\|\mathcal G_{\gamma,\lambda}(z^j)\|^2
\]
yields \eqref{eq:rate-inexact}. 

Finally, if $\sum_{j=0}^{\infty}\delta_j<\infty$, then the partial sums $\sum_{j=0}^{J-1}\delta_j$ are uniformly bounded in $J$, and the estimate \eqref{eq:rate-inexact} implies
\[
\min_{j=0,\dots,J-1}\|\mathcal G_{\gamma,\lambda}(z^j)\|
=
\mathcal O(J^{-1/2}).
\]
\end{proof}

\section{EPPL for Simple Bilevel Problem}\label{sec:simple}
We now specialize our EPPL to the simple bilevel problem. 
This setting serves as a natural starting point for two reasons. 
Firstly, by removing explicit feasible-set constraints, it isolates the essential difficulty caused by the exact-penalty term $\|\nabla g(x)\|_1$ and allows us to focus on the core mechanism of the proposed method. 
Secondly, in this setting, the prox-linear subproblem admits an explicit dual reformulation as a box-constrained quadratic program, which in turn enables an efficient dual first-order solver with closed-form primal recovery.

We consider the simple bilevel problem: 
\begin{equation}\label{eq:simplebilevel}
	\begin{aligned}
		\min_{x\in U}\quad & F(x) \\
		\text{s.t.}\quad & x \in \arg\min_{z\in\mathbb{R}^{d_x}} G(z).
	\end{aligned}
\end{equation}

Here, $U\subseteq \mathbb R^{d_x}$ is a nonempty closed convex set and 
$S:=\arg\min_{z\in\mathbb R^{d_x}}G(z)$ is assumed to be nonempty with 
$S\subseteq U$. As a direct specialization of the assumptions introduced for
the general bilevel problem, we assume: 
(i) $G$ satisfies the Polyak--\L ojasiewicz inequality on $U$;
(ii) $F$ is Lipschitz continuous on $U$;
(iii) $F$, $G$, and $\nabla G$ are Lipschitz smooth on $U$.
Under assumption (i), problem \eqref{eq:simplebilevel} is equivalent to the following gradient-based reformulation:
\begin{equation}\label{eq:SBP2}
	\min_{x\in U}\ F(x)
	\quad \text{s.t.}\quad
	\nabla G(x)=0.
\end{equation}
Under assumptions (i)-(iii), the exact-penalty analysis developed in the Theorem \ref{prop:l1-global} remains valid in the present simple bilevel setting. 
Thus, for sufficiently large $\gamma>0$, problem \eqref{eq:SBP2} is equivalent to the penalized problem
\begin{equation}\label{eq:penalized_problem}
	\min_{x\in U} \quad \Phi_\gamma(x):=F(x)+\gamma P(x), 
\end{equation}
where $	P(x):=\|\nabla G(x)\|_1. $

The details of the Exact-Penalty Method are given in Algorithm \ref{alg:outer-continuation}. 

\begin{algorithm}[H]
	\caption{Exact-Penalty Method for \eqref{eq:penalized_problem}}\label{alg:outer-continuation}
	\begin{algorithmic}[1]
		
		\State \textbf{Input:} $x^0$ and $\gamma_0>0$, $\tau>1$, $\varepsilon_{f}>0$, $\varepsilon_{s}>0$. 
		
		\State Set $k:=0$.
		
		\State \textbf{while} 	$R_{f}^{k}> \varepsilon_{f},\ \text{or}\ 
		R_{s}^{k}> \varepsilon_{s}$, \textbf{do}
		
		\State \quad Solve 
		\begin{equation}\label{eq:EP_gammak}
			\min_{x\in U} \ \Phi_{\gamma_{k}}:=F(x)+\gamma_k P(x) 
		\end{equation}
	    \quad to get $x^{k+1}$.
		
		\State \quad Set 
		$\gamma_{k+1}:=\tau \gamma_{k}$ and $k:=k+1$
		
		\State \textbf{end while}
		
		\State \textbf{Output:} $x_{\rm opt}:=x^{k}$.
		
	\end{algorithmic}
\end{algorithm}

We terminate the algorithm by combining a feasibility residual for the gradient equation
in \eqref{eq:SBP2} and a stationarity residual. 
\begin{equation}\label{eq:EPPLSBP-residuals}
	R_{f}^{k+1}
	:=
	\|\nabla G(x^{k+1})\|_1,
	\qquad
	R_{s}^{k+1}
	:=
	\frac{1}{\lambda_k}\|x^{k+1}-x^k\|_2 .
\end{equation}
The lower-level feasibility residual measures the violation of the equality constraint
$\nabla G(x)=0$ in \eqref{eq:SBP2}, while the stationarity residual  measures the
stationarity of the  step. 
The algorithm stops once
\begin{equation}\label{eq:EPPLSBP-stop}
	R_{f}^{k+1}\le \varepsilon_{f},
	\qquad
	R_{s}^{k+1}\le \varepsilon_{s}.
\end{equation}
If \eqref{eq:EPPLSBP-stop} is not satisfied, the penalty parameter is increased.

To solve \eqref{eq:EP_gammak}, we apply the prox-linear method with the
penalty parameter $\gamma_k$ fixed. More precisely, at the $k$-th penalty
stage, we initialize
$
x^{k,0}:=x^k. 
$
Then, for the fixed value $\gamma_k$, we perform several prox-linear
iterations indexed by $j=0,1,2,\ldots$. At the $j$-th prox-linear iteration,
the model of $\Phi_{\gamma_k}$ at $x^{k,j}$ is defined by
\begin{equation}\label{eq:model}
	\begin{aligned}
		\ell_{\gamma_k,\lambda}(x;x^{k,j})
		:=
		&\, F(x^{k,j})
		+\nabla F(x^{k,j})^\top(x-x^{k,j}) 
		+\gamma_k
		\Big\|
		\nabla G(x^{k,j})
		+\nabla^2G(x^{k,j})(x-x^{k,j})
		\Big\|_1  \\
		&+\frac{1}{2\lambda}\|x-x^{k,j}\|_2^2 .
	\end{aligned}
\end{equation}
The next prox-linear iterate is obtained by solving
\begin{equation}\label{eq:subproblem}
	x^{k,j+1}
	\in
	\arg\min_{x\in U}
	\ell_{\gamma_k,\lambda}(x;x^{k,j}).
\end{equation}
The fixed-$\gamma_k$ prox-linear loop is terminated when
\begin{equation}\label{eq:fixed-gamma-stop}
	R_{s}^{k,j+1}
	:=
	\frac{1}{\lambda}
	\|x^{k,j+1}-x^{k,j}\|_2
	\leq
	\epsilon_{\rm in}. 
\end{equation}
Let $J_k$ be the final prox-linear iteration index at the $k$-th penalty stage.
We then set
\begin{equation}\label{eq:outer-update-from-inner}
	x^{k+1}:=x^{k,J_k}.
\end{equation}

The details of the EPPL for simple bilevel problem (EPPL-SBP) are given in Algorithm  \ref{alg:fixed-gamma-proxlinear}

\begin{algorithm}[H]
	\caption{EPPL-SBP for \eqref{eq:EP_gammak}}
	\label{alg:fixed-gamma-proxlinear}
	\begin{algorithmic}[1]
		
		\State \textbf{Input:} $\gamma_k$,  $x^{k,0}:=x^k$, and $\lambda>0$, $\epsilon_{\rm in}>0$. 
		
		\State Set $j:=0$.
		
		\State \textbf{while} $	R_{s}^{k,j}> 
		\epsilon_{\rm in}$, \textbf{do}

			\State \quad Compute $			\nabla F(x^{k,j}),\ 
			\nabla G(x^{k,j}),\ 
			\nabla^2G(x^{k,j}).$

		\State \quad Solve the prox-linear subproblem \eqref{eq:subproblem}
	to get $	x^{k,j+1}$.

		\State \quad Set $j:=j+1$.
		
		\State \textbf{end while}
		
		\State Set $J_k:=j$ and $x^{k+1}:=x^{k,J_k}$.
		
		\State \textbf{Output:} $x^{k+1}$.  
		
	\end{algorithmic}
\end{algorithm}

\section{Solving Dual Problems Via  Spectral Projected Gradient}\label{sec:dual-subproblem}
In this section, we derive the dual reformulation of \eqref{eq:model}. 
For the dual reformulation developed below, we specialize to the unconstrained case $U=\mathbb R^{d_x}$. In this case, the prox-linear subproblem admits a
box-constrained quadratic dual reformulation.

For notational simplicity, throughout this section we fix one penalty stage
$k$ and suppress this outer index. Hence, the penalty parameter
$\gamma_k$ is denoted simply by $\gamma$, and the prox-linear iterates
$x^{k,j}$ and $x^{k,j+1}$ are denoted by $x^j$ and $x^{j+1}$, respectively.
Thus, all quantities in this section are associated with a fixed penalty
parameter $\gamma$ and a local prox-linear iteration index $j$.

Ignoring the constant term $F(x^j)$, consider
\begin{equation}\label{eq:cp_subprob_original}
	\min_{x\in\mathbb{R}^{d_x}}\ 
	\frac{1}{2\lambda}\|x-x^j\|_2^2
	+\nabla F(x^j)^\top(x-x^j)
	+\gamma\Big\|\nabla G(x^j)+\nabla^2 G(x^j)(x-x^j)\Big\|_1.
\end{equation}
Define
\[
v^j:=x^j-\lambda\nabla F(x^j),\qquad
B^j:=\nabla^2 G(x^j),\qquad
a^j:=\nabla G(x^j)-B^j x^j.
\]
Dropping constants independent of $x$ and completing the square, \eqref{eq:cp_subprob_original} is equivalent to
\begin{equation}\label{eq:primal_standard_unconstrained}
	\min_{x\in\mathbb{R}^{d_x}}\ 
	\frac{1}{2\lambda}\|x-v^j\|_2^2+\gamma\|a^j+B^j x\|_1.
\end{equation}

\begin{Remark}
In the remainder of this section, we focus on the unconstrained upper-level
case $U=\mathbb R^{d_x}$. This setting covers the simple bilevel model used in
our numerical experiments and leads to a box-constrained quadratic dual subproblem. 
For a general closed set $U$, the subproblem can be written as
\begin{equation*}\label{eq:primal_standard_with_U}
	\min_{x\in\mathbb{R}^{d_x}}\ 
	\frac{1}{2\lambda}\|x-v^j\|_2^2
	+\gamma\|a^j+B^j x\|_1
	+\delta_U(x),
\end{equation*}
where $\delta_U$ is the indicator function of $U$, namely,
\[
\delta_U(x)=
\begin{cases}
	0, & x\in U,\\
	+\infty, & x\notin U.
\end{cases}
\]
However, in this case the dual objective is generally no longer a simple quadratic function.
\end{Remark}

\subsection{Dual Subproblem and Optimality Condition}

In this subsection, we reformulate the primal  subproblem as an equivalent dual problem by using the Fenchel conjugate of the $\ell_1$ term. The following proposition summarizes the dual reformulation and its explicit quadratic form.

\begin{Proposition}\label{prop:dual-reformulation}
The dual problem of \eqref{eq:primal_standard_unconstrained} is 
\begin{equation}\label{eq:dual-problem-min}
	\min_{y\in\Omega_\gamma}\ d(y):=
	\frac{\lambda}{2}\|(B^j)^\top y\|_2^2-\langle y,c^j\rangle.
\end{equation}
where $\Omega_\gamma$ is defined by 	$\Omega_\gamma:=\{y\in\mathbb{R}^n:\|y\|_\infty\le\gamma\}$ and $
c^j:=a^j+B^j v^j.
$
Moreover, if $y^\ast$ is a dual solution, the corresponding primal solution is
recovered by
\begin{equation}\label{eq:primal-recovery-from-dual}
	x^\ast=v^j-\lambda (B^j)^\top y^\ast.
\end{equation}

\end{Proposition}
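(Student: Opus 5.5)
The plan is to dualize the $\ell_1$ term through its Fenchel representation $\gamma\|w\|_1=\max_{\|y\|_\infty\le\gamma}\langle y,w\rangle$, turning \eqref{eq:primal_standard_unconstrained} into a saddle-point problem. Applying this with $w=a^j+B^jx$, the primal problem becomes
\[
\min_{x\in\mathbb{R}^{d_x}}\ \max_{y\in\Omega_\gamma}\ \Psi(x,y):=\frac{1}{2\lambda}\|x-v^j\|_2^2+\langle y,a^j+B^jx\rangle .
\]
Here $\Psi$ is strongly convex in $x$ and linear (hence concave) in $y$, and $\Omega_\gamma$ is a nonempty compact convex box. Sion's minimax theorem, or standard Fenchel--Rockafellar duality (the $\ell_1$ term is finite everywhere, so the constraint qualification holds trivially), therefore lets us interchange min and max with no duality gap. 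Moreover, a dual maximizer exists because $\Omega_\gamma$ is compact and the dual function is continuous.

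Next, for fixed $y$ I will minimize in $x$ explicitly. The first-order condition $\lambda^{-1}(x-v^j)+(B^j)^\top y=0$ gives $x(y)=v^j-\lambda(B^j)^\top y$. Substituting back yields
\[
\frac{\lambda}{2}\|(B^j)^\top y\|^2+\langle y,a^j\rangle+\langle y,B^jv^j\rangle-\lambda\|(B^j)^\top y\|^2=-\frac{\lambda}{2}\|(B^j)^\top y\|^2+\langle y,c^j\rangle .
\]
Maximizing this over $\Omega_\gamma$ is the same as minimizing its negative, $d(y)$, over $\Omega_\gamma$, which is exactly \eqref{eq:dual-problem-min}.

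For primal recovery, let $y^\ast$ be a dual solution and $x^\ast$ the unique primal solution, which exists by strong convexity. Because there is no duality gap, $(x^\ast,y^\ast)$ is a saddle point of $\Psi$. Hence $x^\ast$ minimizes $\Psi(\cdot,y^\ast)$, and since that minimizer is unique, $x^\ast=x(y^\ast)=v^j-\lambda(B^j)^\top y^\ast$, which is \eqref{eq:primal-recovery-from-dual}. The recovered primal point is the same whichever dual solution is chosen, even though $y^\ast$ need not be unique when $B^j$ is singular. The only delicate step is justifying the saddle-point property, meaning strong duality together with attainment on both sides. I would settle it by citing Sion's theorem, using compactness of $\Omega_\gamma$ and the convex--concave structure of $\Psi$, and then using uniqueness of the primal minimizer.
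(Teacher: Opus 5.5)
Your proposal is correct and is essentially the paper's argument: the paper introduces $r=a^j+B^jx$, forms the Lagrangian and minimizes over $r$ via the conjugate of $\gamma\|\cdot\|_1$, which yields exactly your $\Psi(x,y)$ restricted to $\Omega_\gamma$, and then performs the same closed-form minimization in $x$ and substitution to obtain $d(y)$. The one real difference is in your favor: the paper reads off $x^\ast=v^j-\lambda(B^j)^\top y^\ast$ from the Lagrangian minimizer without justifying zero duality gap or the saddle-point property, whereas your use of Sion's theorem (via compactness of $\Omega_\gamma$) together with uniqueness of the strongly convex primal minimizer supplies that step.
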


\begin{proof}
	Introduce an auxiliary variable 
	$
	r=a^j+B^jx. 
	$
	Then \eqref{eq:primal_standard_unconstrained} can be equivalently written as
	\begin{equation}\label{eq:primal-with-r}
		\begin{aligned}
			\min_{x\in\mathbb R^{d_x},\, r\in\mathbb R^{d_x}}
			\quad &
			\frac{1}{2\lambda}\|x-v^j\|_2^2+\gamma\|r\|_1  \\
			\text{s.t.}\quad &
			r=a^j+B^jx .
		\end{aligned}
	\end{equation}
	The Lagrangian associated with the equality constraint
	$r=a^j+B^jx$ is
	\[
	\mathcal{L}(x,r;y)
	=
	\frac{1}{2\lambda}\|x-v^j\|_2^2
	+\gamma\|r\|_1
	+\langle y,a^j+B^jx-r\rangle ,
	\]
	where $y\in\mathbb R^{d_x}$ is the Lagrange multiplier.
	
	The dual function is obtained by minimizing the Lagrangian over $x$ and $r$:
	\[
	q(y):=\inf_{x\in\mathbb R^{d_x},\, r\in\mathbb R^{d_x}}\mathcal{L}(x,r;y).
	\]
	First, consider the minimization with respect to $r$: 
	$
	\inf_{r\in\mathbb R^{d_x}}
	\left\{
	\gamma\|r\|_1-\langle y,r\rangle
	\right\}. 
	$
	By the conjugacy relation of the $\ell_1$-norm, we have
	\[
	\inf_{r\in\mathbb R^{d_x}}
	\left\{
	\gamma\|r\|_1-\langle y,r\rangle
	\right\}
	=
	\begin{cases}
		0, & \|y\|_\infty\le \gamma,\\
		-\infty, & \text{otherwise}.
	\end{cases}
	\]
	Therefore, the dual feasible set is 
	$
	\Omega_\gamma=\{y\in\mathbb R^{d_x}:\|y\|_\infty\le\gamma\}. 
	$
	
	Next, for $y\in\Omega_\gamma$, we minimize the remaining part with respect to
	$x$:
	\[
	\inf_{x\in\mathbb R^{d_x}}
	\left\{
	\frac{1}{2\lambda}\|x-v^j\|_2^2
	+\langle (B^j)^\top y,x\rangle
	+\langle y,a^j\rangle
	\right\}.
	\]
	The first-order optimality condition with respect to $x$ gives
	$
	\frac{1}{\lambda}(x-v^j)+(B^j)^\top y=0. 
	$
	Hence
	\[
	x=v^j-\lambda (B^j)^\top y.
	\]
	Substituting this minimizer into the Lagrangian yields
	\[
	q(y)
	=
	\langle y,a^j\rangle
	+\langle (B^j)^\top y,v^j\rangle
	-\frac{\lambda}{2}\|(B^j)^\top y\|_2^2.
	\]
	Since
$
	\langle (B^j)^\top y,v^j\rangle
	=
	\langle y,B^jv^j\rangle, 
$
	we obtain
	\[
	q(y)
	=
	\langle y,a^j+B^jv^j\rangle
	-\frac{\lambda}{2}\|(B^j)^\top y\|_2^2
	=
	\langle y,c^j\rangle
	-\frac{\lambda}{2}\|(B^j)^\top y\|_2^2.
	\]
	Thus the Lagrange dual problem is
	\[
	\max_{y\in\Omega_\gamma}
	\left\{
	\langle y,c^j\rangle
	-\frac{\lambda}{2}\|(B^j)^\top y\|_2^2
	\right\}.
	\]
	Equivalently, changing maximization into minimization, we obtain
	\[
	\min_{y\in\Omega_\gamma}
	\left\{
	\frac{\lambda}{2}\|(B^j)^\top y\|_2^2
	-\langle y,c^j\rangle
	\right\},
	\]
	which is exactly \eqref{eq:dual-problem-min}.
	
	Finally, the primal recovery formula follows from the minimizer of the
	Lagrangian with respect to $x$, namely
	\[
	x^\ast=v^j-\lambda (B^j)^\top y^\ast .
	\]
	The proof is complete.
\end{proof}

Now we study the first-order optimality condition of the dual problem derived in Proposition  \ref{prop:dual-reformulation}. 
In particular, we show that the optimality system can be equivalently written as a projected fixed-point equation, which naturally motivates projected first-order methods for solving the dual problem.

Since $d(\cdot)$ is continuously differentiable and $\Omega_\gamma$ is nonempty, closed, and convex, a point $y^\ast$ solves \eqref{eq:dual-problem-min} if and only if
\begin{equation}\label{eq:foc-normalcone}
	0\in \nabla d(y^\ast)+N_{\Omega_\gamma}(y^\ast),
\end{equation}
where $N_{\Omega_\gamma}(\cdot)$ denotes the normal cone to $\Omega_\gamma$. By the standard projection characterization, \eqref{eq:foc-normalcone} is equivalent to
\begin{equation}\label{eq:dual-fixed-point-min}
	y^\ast=\Pi_{\Omega_\gamma}\bigl(y^\ast-t\nabla d(y^\ast)\bigr),
	\qquad \forall\, t>0.
\end{equation}
This projected fixed-point form motivates the use of projected first-order methods for the dual problem, where the only nonsmoothness is handled by the projection onto the box constrained set $\Omega_\gamma$.

\subsection{Dual Spectral Projected Gradient (SPG)}

In this subsection, we solve the dual problem \eqref{eq:dual-problem-min} by a spectral projected gradient (SPG) method \cite{Birgin2000SPG}. 
The basic idea of SPG is to combine projected gradient steps with Barzilai--Borwein type spectral steplengths and a nonmonotone Armijo line search. 
This class of methods was developed for smooth optimization over simple closed convex sets, and is particularly effective when the projection onto the feasible set is easy to compute, see \cite{Birgin2001SPG}.

For a fixed outer iterate $x^j$, consider the dual minimization problem \eqref{eq:dual-problem-min}, 
its gradient is
\begin{equation}\label{eq:grad-dual-dk}
	\nabla d_j(y)=\lambda B^j(B^j)^\top y-c^j.
\end{equation}
Moreover, the projection onto $\Omega_\gamma$ is given componentwise by
\begin{equation}\label{eq:projection-box}
	\bigl(\Pi_{\Omega_\gamma}(u)\bigr)_i
	=
	\min\{\gamma,\max\{-\gamma,u_i\}\},
	\qquad i=1,\dots,n.
\end{equation}

The inner SPG solver is terminated according to the projected-gradient residual
\begin{equation}\label{eq:SPG-stop}
	r_{\mathrm{spg}}(y)
	:=
	\left\|
	\Pi_{\Omega_\gamma}\bigl(y-\eta\nabla d_j(y)\bigr)-y
	\right\|_2
	\le
	\varepsilon_{\mathrm{spg}},
\end{equation}
where $\eta>0$ is the current spectral steplength.

\begin{algorithm}[H]
	\caption{Dual SPG Inner Solver for \eqref{eq:dual-problem-min}}
	\label{alg:dual-spg}
	\begin{algorithmic}[1]
		
		\State \textbf{Input:} $B^j,\ c^j$, $\gamma>0$, $\lambda>0$,  $\varepsilon_{\rm spg}>0$. $y^0$,  $\eta_0>0$, $\eta_{\min}>0$, $\eta_{\max}>0$. 
		
		\State Set $\ell:=0$. Project the initial point onto $\Omega_\gamma$: 
		$
		y^0:=\Pi_{\Omega_\gamma}(y^0).
		$

		\State \textbf{while} $	r_{\mathrm{spg}}(y^{\ell})
		>
		\varepsilon_{\mathrm{spg}}$, \textbf{do}

		\State \quad Compute the projected search direction 
	$
			p^{\ell}
			:=
			\Pi_{\Omega_\gamma}
			\bigl(y^{\ell}-\eta_t\nabla d_j(y^{\ell})\bigr)-y^{\ell}.
	$
		
		\State \quad Find a stepsize $\alpha_{\ell}\in(0,1]$ by a nonmonotone Armijo line search.
		
		\State \quad Set $y^{{\ell}+1}:=y^{\ell}+\alpha_{\ell} p^{\ell},\ 	s^{\ell}:=y^{{\ell}+1}-y^{\ell},\ q^{\ell}:=\nabla d_j(y^{{\ell}+1})-\nabla d_j(y^{\ell}).$

		\State \quad Update 
		$
			\eta_{{\ell}+1}
			:=
			\min\{\eta_{\max},\max\{\eta_{\min},\	\frac{\langle s^{\ell},s^{\ell}\rangle}{\langle s^{\ell},q^{\ell}\rangle} \}\}.
       $
		
		\State \quad Set ${\ell}:={\ell}+1$.
		
		\State \textbf{end while}
		
		\State Set $y^\star:=y^{\ell}$. Recover the primal point by
		$
		x^{j+1}:=v^j-\lambda(B^j)^\top y^\star .
		$
		
		\State \textbf{Output:} the final dual iterate $y^\star$ and the recovered primal point $x^{j+1}$.
		
	\end{algorithmic}
\end{algorithm}

The use of SPG in Algorithm~\ref{alg:dual-spg} is motivated by three features of the dual subproblem \eqref{eq:dual-problem-min}. 
Firstly, the objective $d_j(\cdot)$ is a smooth convex quadratic function. 
Secondly, the feasible set $\Omega_\gamma$ is a simple box, so the projection \eqref{eq:projection-box} is explicit. 
Thirdly, the dual iterate can be converted back to a primal iterate by the closed-form recovery formula \eqref{eq:primal-recovery-from-dual}. 
These properties make SPG a natural and efficient inner solver within our EPPL-SBP.

We next record the convergence guarantee of the dual SPG iterations. The result is a direct consequence of the general theory of nonmonotone spectral projected gradient methods \cite{Birgin2000SPG}, specialized here to the smooth convex quadratic dual objective and the box-constrained set $\Omega_\gamma$.

\begin{Theorem}\label{thm:spg-dual-convergence}
	Let $\{y^{\ell}\}_{{\ell}\ge 0}\subset \Omega_\gamma$ be the sequence generated by Algorithm~\ref{alg:dual-spg}. Then the following statements hold.
	\begin{itemize}
		\item[(i)] The iterates are well defined and remain feasible, i.e., $y^{\ell}\in\Omega_\gamma$ for all ${\ell}\ge 0$.
		\item[(ii)] Every accumulation point $\bar y$ of $\{y^{\ell}\}$ satisfies the first-order optimality condition
		$
		0\in \nabla d_j(\bar y)+N_{\Omega_\gamma}(\bar y). 
		$
		Equivalently,
		$
		\bar y=\Pi_{\Omega_\gamma}\bigl(\bar y-t\nabla d_j(\bar y)\bigr),
		\  \forall\, t>0.
		$
		\item[(iii)] Since $d_j(\cdot)$ is a convex quadratic function and $\Omega_\gamma$ is convex, every such accumulation point is a global minimizer of \eqref{eq:dual-problem-min}.
	\end{itemize}
\end{Theorem}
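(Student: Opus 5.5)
Our plan is to verify that the dual subproblem \eqref{eq:dual-problem-min} satisfies the hypotheses of the nonmonotone SPG convergence theory of \cite{Birgin2000SPG}, and then to upgrade stationarity to global optimality by convexity. Concretely, we need that $d_j$ is continuously differentiable on an open set containing $\Omega_\gamma$, and that $\Omega_\gamma$ is nonempty, closed and convex. The first holds because $d_j$ is a quadratic with gradient \eqref{eq:grad-dual-dk}, and this gradient is Lipschitz with constant $\lambda\|B^j\|^2$. The second holds because $\Omega_\gamma$ is the box $[-\gamma,\gamma]^n$ with $\gamma>0$; it is moreover compact, which will supply accumulation points and boundedness of $d_j$ from below.

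For (i), we argue by induction. We have $y^0\in\Omega_\gamma$ by the initial projection. If $y^\ell\in\Omega_\gamma$, then $\Pi_{\Omega_\gamma}(y^\ell-\eta_\ell\nabla d_j(y^\ell))\in\Omega_\gamma$. Hence $y^{\ell+1}=(1-\alpha_\ell)y^\ell+\alpha_\ell\,\Pi_{\Omega_\gamma}(\cdots)$ is a convex combination of two points of $\Omega_\gamma$, so it lies in $\Omega_\gamma$.

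Well-definedness requires that the nonmonotone Armijo search terminates finitely whenever $p^\ell\neq 0$, which is the case since we are not yet at the stopping test. We plan to use the projection inequality $\langle \nabla d_j(y^\ell),p^\ell\rangle\le -\eta_\ell^{-1}\|p^\ell\|^2<0$, which makes $p^\ell$ a descent direction. The Taylor expansion of the quadratic then gives, for small $\alpha$,
\[
d_j(y^\ell+\alpha p^\ell)\le d_j(y^\ell)+\alpha\langle\nabla d_j(y^\ell),p^\ell\rangle+\tfrac{\alpha^2\lambda\|B^j\|^2}{2}\|p^\ell\|^2,
\]
which lies below the nonmonotone reference value (a max over recent values, which is $\ge d_j(y^\ell)$) plus $\sigma\alpha\langle\nabla d_j(y^\ell),p^\ell\rangle$. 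Backtracking therefore stops. The clipping $\eta_\ell\in[\eta_{\min},\eta_{\max}]$ keeps the spectral step well defined even when $\langle s^\ell,q^\ell\rangle\le 0$, for instance when $B^j(B^j)^\top$ is singular.

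For (ii), we invoke \cite[Theorem 2.4]{Birgin2000SPG}, the SPG2 variant matching our direction $p^\ell$: every accumulation point $\bar y$ satisfies $\Pi_{\Omega_\gamma}(\bar y-\nabla d_j(\bar y))=\bar y$, i.e.\ it is constrained stationary. This is the main step, and it relies on the uniform bounds $\eta_{\min}\le\eta_\ell\le\eta_{\max}$ together with the boundedness of $d_j$ on the compact set $\Omega_\gamma$. We also want $\bar y\in\Omega_\gamma$, which follows from closedness. The equivalence with $0\in\nabla d_j(\bar y)+N_{\Omega_\gamma}(\bar y)$ and with the fixed-point identity for every $t>0$ is the projection characterization already recorded in \eqref{eq:foc-normalcone}--\eqref{eq:dual-fixed-point-min}. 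It follows because $\bar y=\Pi(\bar y-tw)$ iff $-tw\in N_{\Omega_\gamma}(\bar y)$, and the normal cone is a cone, so the condition is independent of $t$. Compactness guarantees that at least one accumulation point exists.

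For (iii), we note that $d_j$ is convex, since its Hessian $\lambda B^j(B^j)^\top$ is positive semidefinite. For a convex differentiable function over a convex set, the condition $0\in\nabla d_j(\bar y)+N_{\Omega_\gamma}(\bar y)$ means $\langle\nabla d_j(\bar y),y-\bar y\rangle\ge0$ for all $y\in\Omega_\gamma$. Combined with the gradient inequality $d_j(y)\ge d_j(\bar y)+\langle\nabla d_j(\bar y),y-\bar y\rangle$, this yields global optimality.

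The main obstacle is step (ii): the nonmonotone line search prevents a plain monotone-descent argument. We will rely on the cited theorem rather than reprove it, taking care that our parameters (memory length, $\sigma$, backtracking safeguards) fall within its framework.
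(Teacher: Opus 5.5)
Your proposal is correct and follows the paper's route. The paper simply cites \cite[Theorem~2.4]{Birgin2000SPG} for (i)--(ii) and invokes convexity of $d_j$ for (iii); you additionally spell out the convex-combination feasibility induction, the finite termination of the backtracking, the $t$-independence of the fixed-point characterization via the normal cone, and the variational-inequality argument for global optimality. One small point on your clipping remark: for this convex quadratic $\langle s^\ell,q^\ell\rangle=\lambda\|(B^j)^\top s^\ell\|_2^2\ge 0$, which can be zero, and then the paper's ratio is undefined, so you should adopt the convention of \cite{Birgin2000SPG} and set $\eta_{\ell+1}=\eta_{\max}$ in that case.
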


\begin{proof}
Parts (i) and (ii) follow directly from \cite[Theorem~2.4]{Birgin2000SPG}. Part (iii) follows from the convexity of the quadratic objective $d_j(\cdot)$.
\end{proof}

\section{Numerical Experiments}\label{sec:numerics}

In this section, we apply our method EPPL-SBP to
Minimum Norm Solution Problem(MNP) and compare its performance with other existing methods in the literature \cite{beck2014minimal,sabach2017first,kaushik2021vi,gong2021bi,wang2024bisection}.
The numerical tests are conducted in \textsc{Matlab} R2023a on a MacBook Air(13-inch, M3, 2024) running macOS Sonoma~14.6 with an Apple M3 chip and 16~GB of memory.

\subsection{EPPL-SBP on the Minimum-Norm Problem (MNP)}

We consider the simple bilevel problem \cite{wang2024bisection}
\begin{equation}\label{eq:linreg_bilevel}
	\begin{aligned}
		\min_{x\in\mathbb{R}^n}\quad & F(x):=\frac12\|x\|_2^2 \\
		\text{s.t.}\quad & x\in \arg\min_{z\in\mathbb{R}^n}\ G(z):=\frac12\|Az-b\|_2^2.
	\end{aligned}
\end{equation}
This model seeks the minimum-norm least-squares solution.

We use the YearPredictionMSD dataset\footnote{https://archive.ics.uci.edu/dataset/203/yearpredictionmsd}, which consists of 515,345 songs released between 1992 and 2011. Each sample includes the release year together with 90 additional attributes. In our experiment, we randomly draw 1000 songs independently and uniformly from the dataset, and denote by $A$ the resulting feature matrix and by $b$ the corresponding vector of release years. 
The features are normalized to the range $[-1,1]$, an intercept term is appended, and the response vector is normalized to the range $[0,1]$. 
We then denote by $m$ and $n$ the dimensions of the resulting matrix $A\in\mathbb{R}^{m\times n}$.

The penalty parameter is initialized by $\gamma_0=100$ and is increased by
$\gamma_{k+1}
=
\tau_\gamma\gamma_k$
, where
$
\tau_\gamma=1.2. 
$
For each fixed penalty parameter $\gamma_k$, we perform at most 40 prox-linear iterations. The proximal stepsize is fixed as
$
\lambda=10^{-2}.
$
The whole continuation procedure is stopped once
\begin{equation}\label{eq:num-eppl-res}
	R_f\le \varepsilon_f=10^{-5},
	\qquad
	R_s\le \varepsilon_s=10^{-5} .
\end{equation}
is satisfied. 
If \eqref{eq:num-eppl-res} is not satisfied, we terminate the current fixed-$\gamma_k$
stage and move to the next penalty parameter.

For each prox-linear subproblem, the dual problem
\eqref{eq:dual-problem-min} is solved by a nonmonotone spectral projected
gradient method. 
Let $q$ denote the cumulative index of the prox-linear
subproblem, namely
$
q=q(k,j). 
$
To avoid oversolving early subproblems while retaining high final accuracy,
we use the adaptive inner accuracy schedule
\begin{equation}\label{eq:inner_adp_new}
	\varepsilon_{\rm spg}^{k,j}
	=
	\begin{cases}
		10^{-3}, & q(k,j)\le 15,\\[0.3em]
		10^{-4}, & 15<q(k,j)\le 50,\\[0.3em]
		10^{-6}, & q(k,j)>50,
	\end{cases}
	\qquad
	T_{\max}^{k,j}
	=
	\begin{cases}
		200, & q(k,j)\le 15,\\[0.3em]
		400, & 15<q(k,j)\le 50,\\[0.3em]
		1000, & q(k,j)>50.
	\end{cases}
\end{equation}
The SPG method stops when
$
r_{\rm spg}\le \varepsilon_{\rm spg} 
$
or when the maximum number $T_{\max}^{k,j}$ of inner iterations is reached.

\begin{figure}[htbp]
	\centering
	\includegraphics[width=0.98\textwidth]{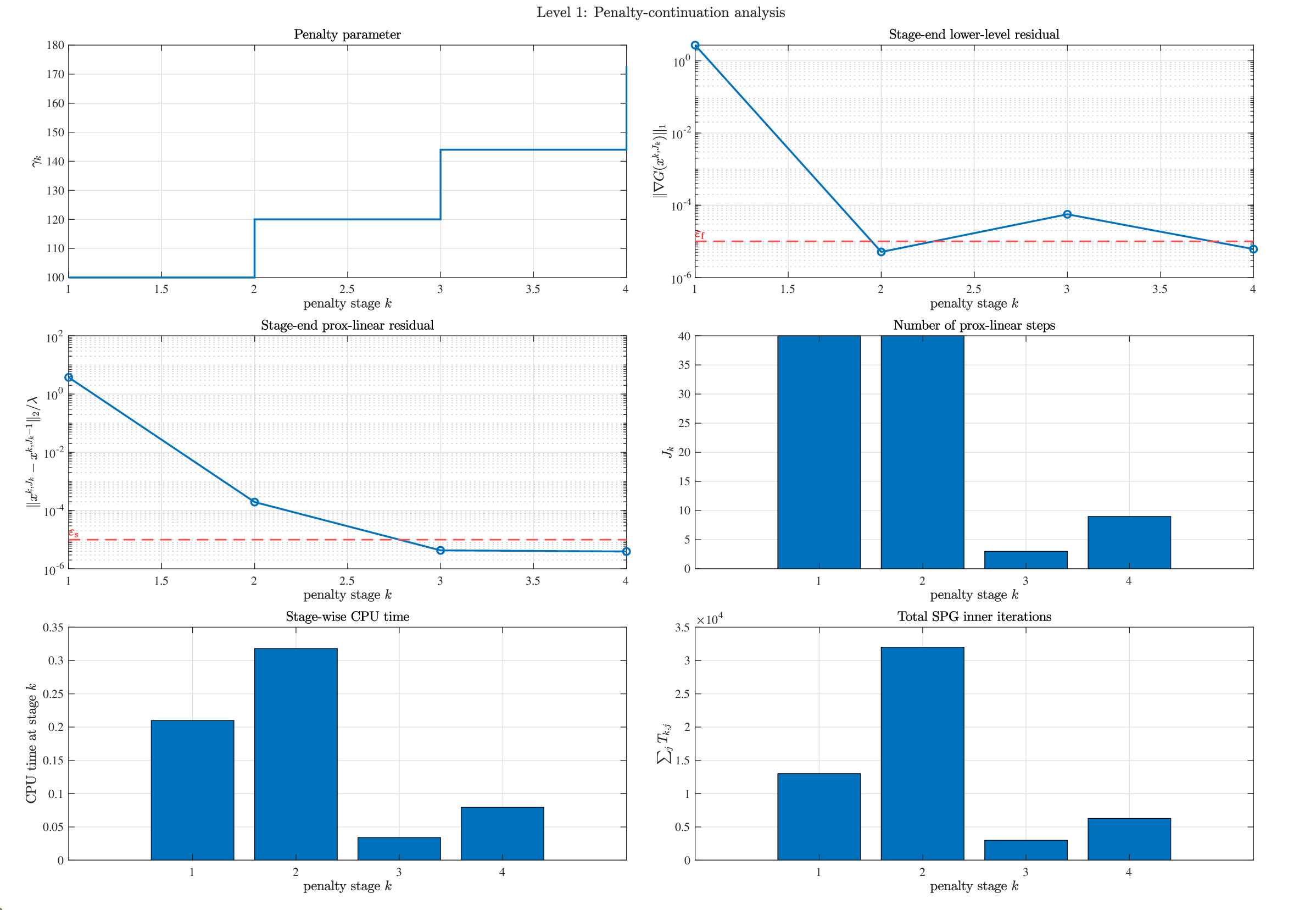}
	\caption{
		Penalty-continuation analysis. 
		The horizontal axis is the
		penalty stage $k$. The panels show the penalty parameter $\gamma_k$, the
		stage-end lower-level residual $\|\nabla G(x^{k,J_k})\|_1$, the
		stage-end prox-linear residual
		$\|x^{k,J_k}-x^{k,J_k-1}\|_2/\lambda$, the number $J_k$ of prox-linear
		steps, the CPU time at each stage, and the total inner SPG effort
		$\sum_j T_{k,j}$.
	}
	\label{fig:level1}
\end{figure}

Figure~\ref{fig:level1} illustrates the outer penalty-continuation behavior of
EPPL-SBP. The method uses only four penalty stages in this run, with 
$
\gamma_1=100,\ 
\gamma_2=120,\ 
\gamma_3=144,\ 
\gamma_4=172.8 .
$
and terminates at the fourth stage. This shows that the continuation strategy
does not need to drive the penalty parameter to a very large value, a moderate
penalty is already sufficient to enforce the lower-level stationarity condition. 
Most of the computational effort is spent in the first two stages, where the
algorithm uses the full budget of $40$ prox-linear steps. These stages are
responsible for the main reduction of the residuals and move the iterates close
to the lower-level solution set. Once this has been achieved, the remaining
stages serve mainly as refinement steps. The third and fourth stages require
only $3$ and $9$ prox-linear iterations, respectively. This demonstrates the
benefit of the continuation strategy:  early stages provide robustness, while
later stages improve feasibility and stationarity at a relatively low
additional cost.

\begin{figure}[htbp]
	\centering
	\includegraphics[width=0.98\textwidth]{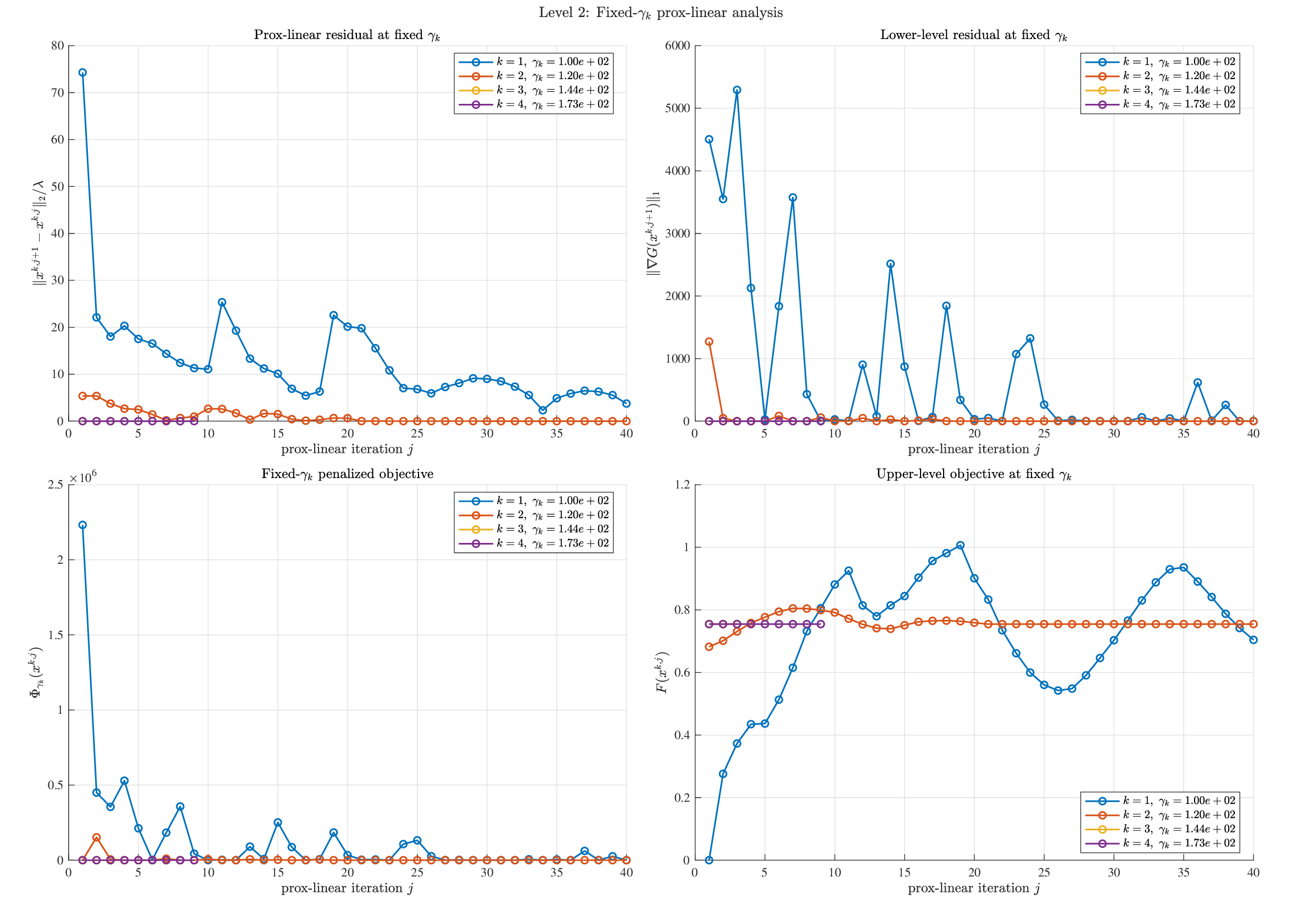}
	\caption{
		Fixed-$\gamma_k$ prox-linear analysis. The horizontal axis is
		the prox-linear iteration index $j$ within each fixed penalty stage.
		The panels show the prox-linear residual, the lower-level residual, the
		fixed-$\gamma_k$ penalized objective, and the upper-level objective.
	}
	\label{fig:level2}
\end{figure}

Figure~\ref{fig:level2} gives a stage-wise view of the fixed-$\gamma_k$
prox-linear iterations. The first two panels show that, in the early penalty
stages, both the prox-linear residual and the lower-level residual can be
large and nonmonotone. This behavior is natural because the method uses local
prox-linear models and solves the dual subproblems inexactly. These early
stages mainly serve to move the iterate toward the lower-level solution set. 
After the continuation has sufficiently increased the penalty parameter, the
prox-linear dynamics become much more stable. In the later stages, the
residuals stay at a much smaller scale, showing that only local refinement is
needed. The plots of $\Phi_{\gamma_k}(\cdot)$ and $F(\cdot)$ further show that the method is
not simply minimizing the penalized objective monotonically; rather, it
adjusts the iterate by balancing upper-level objective reduction and
lower-level stationarity enforcement under each fixed $\gamma_k$.

\begin{figure}[htbp]
	\centering
	\includegraphics[width=0.98\textwidth]{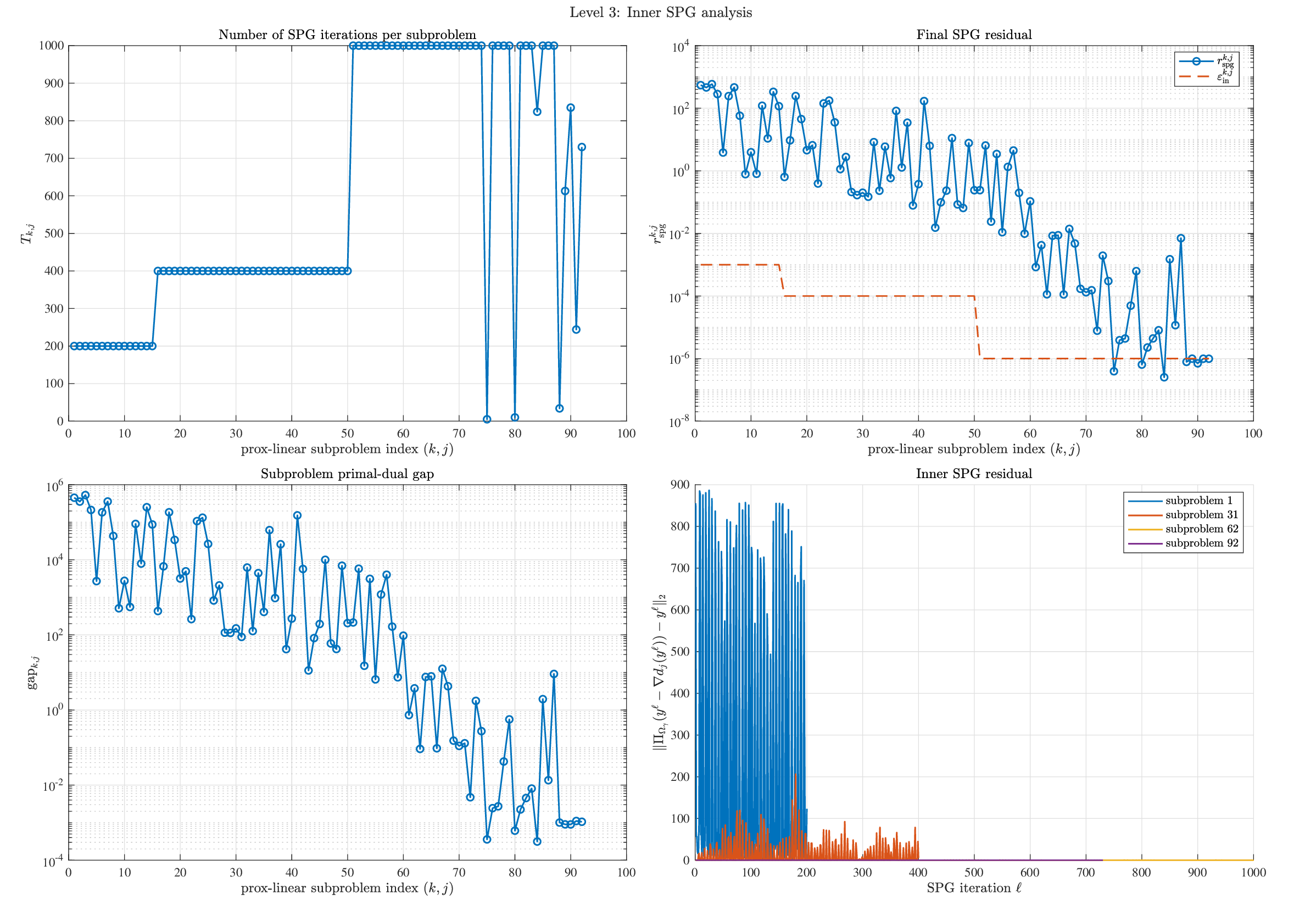}
	\caption{
		Inner SPG analysis. The horizontal axis in the first three
		panels is the prox-linear subproblem index $(k,j)$, while the horizontal
		axis in the last panel is the SPG inner iteration index $\ell$. The
		panels show the number of SPG iterations, the final SPG residual, the
		primal-dual gap of the model subproblem, and the inner SPG residual
		decay for selected subproblems.
	}
	\label{fig:level3}
\end{figure}

Figure~\ref{fig:level3} shows that the inner SPG solver follows an
adaptive accuracy strategy. Early dual subproblems often reach the prescribed
iteration budget, reflecting the fact that they are solved only inexactly and
are mainly used to produce descent directions for the prox-linear layer. As
the iterates approach the solution region, the number of SPG iterations drops
for several subproblems, showing the benefit of warm-starting and continuation. 
The final SPG residual tracks the prescribed tolerance schedule, which is
tightened from $10^{-3}$ to $10^{-4}$ and finally to $10^{-6}$. This prevents
oversolving in the early stage while ensuring high accuracy near convergence.
The primal-dual gap and the representative inner residual decay curves confirm
that the SPG solver solves the dual subproblems to the required accuracy.

The final numerical results are summarized in Table~\ref{tab:eppl-final}.
The method terminates at the fourth penalty stage with $\gamma=172.8$ after
92 prox-linear steps. At the final iterate, we obtain
\[
R_f=\|\nabla G(x^{k+1})\|_1=6.1105\times 10^{-6},
\qquad
R_s=\frac{\|x^{k+1}-x^{k}\|_2}{\lambda}
=
3.944\times 10^{-6},
\]
which satisfies the stopping criterion \eqref{eq:EPPLSBP-stop}. 
These values show that EPPL-SBP recovers a highly accurate minimum-norm
least-squares solution.

\begin{table}[htbp]
	\centering
	\caption{Final performance of EPPL-SBP on the YearPredictionMSD example.}
	\label{tab:eppl-final}
	\begin{tabular}{lc}
		\toprule
		Quantity & Value \\
		\midrule
		Final penalty parameter $\gamma$ & $1.728\times 10^{2}$ \\
		Number of penalty stages & $4$ \\
		Number of prox-linear steps & $92$ \\
		Final lower-level residual $\|\nabla G(x)\|_1$ & $6.1105\times 10^{-6}$ \\
		Final lower-level gradient norm $\|\nabla G(x)\|_2$ & $9.88\times 10^{-7}$ \\
		Final prox-linear residual & $3.944\times 10^{-6}$ \\
		Final SPG residual & $9.884\times 10^{-7}$ \\
		CPU time & $0.6109$ s \\
		\bottomrule
	\end{tabular}
\end{table}

\subsection{Comparison with Existing Methods}
We compare the performance of EPPL-SBP with several existing methods \footnote{The implementations of the benchmark methods are taken from the public repository \url{https://github.com/XuShi22/BisecBiO}. } 
	for solving \eqref{eq:linreg_bilevel}, including the minimal norm gradient method (MNG), the bilevel gradient SAM method (BiG-SAM) \cite{sabach2017first}, the averaging iteratively regularized gradient method (a-IRG) \cite{kaushik2021vi}, the dynamic barrier gradient descent method (DBGD) \cite{gong2021bi}, and the bisection-based method (Bisec-BiO) \cite{wang2024bisection}. Motivated by 
\cite{wang2024bisection}, we use the MATLAB function \texttt{lsqminnorm} to solve problem \eqref{eq:linreg_bilevel} and obtain the optimal values $g^\ast$ and $p^\ast$ for benchmarking purposes. 

The solution accuracy is measured by the prescribed tolerances
$
\epsilon_f=10^{-6},
\ 
\epsilon_g=10^{-7}. 
$
Accordingly, the lower-level and upper-level accuracy requirements are defined by
\[
G(x^k)-g^\ast\le \epsilon_g,
\qquad
|F(x^k)-p^\ast|\le \epsilon_f.
\]
In particular, a method is regarded as having reached the prescribed bilevel accuracy only if both inequalities are satisfied simultaneously.

\begin{figure}[H]
	\centering
	\begin{minipage}{0.48\linewidth}
		\centering
		\includegraphics[width=\linewidth]{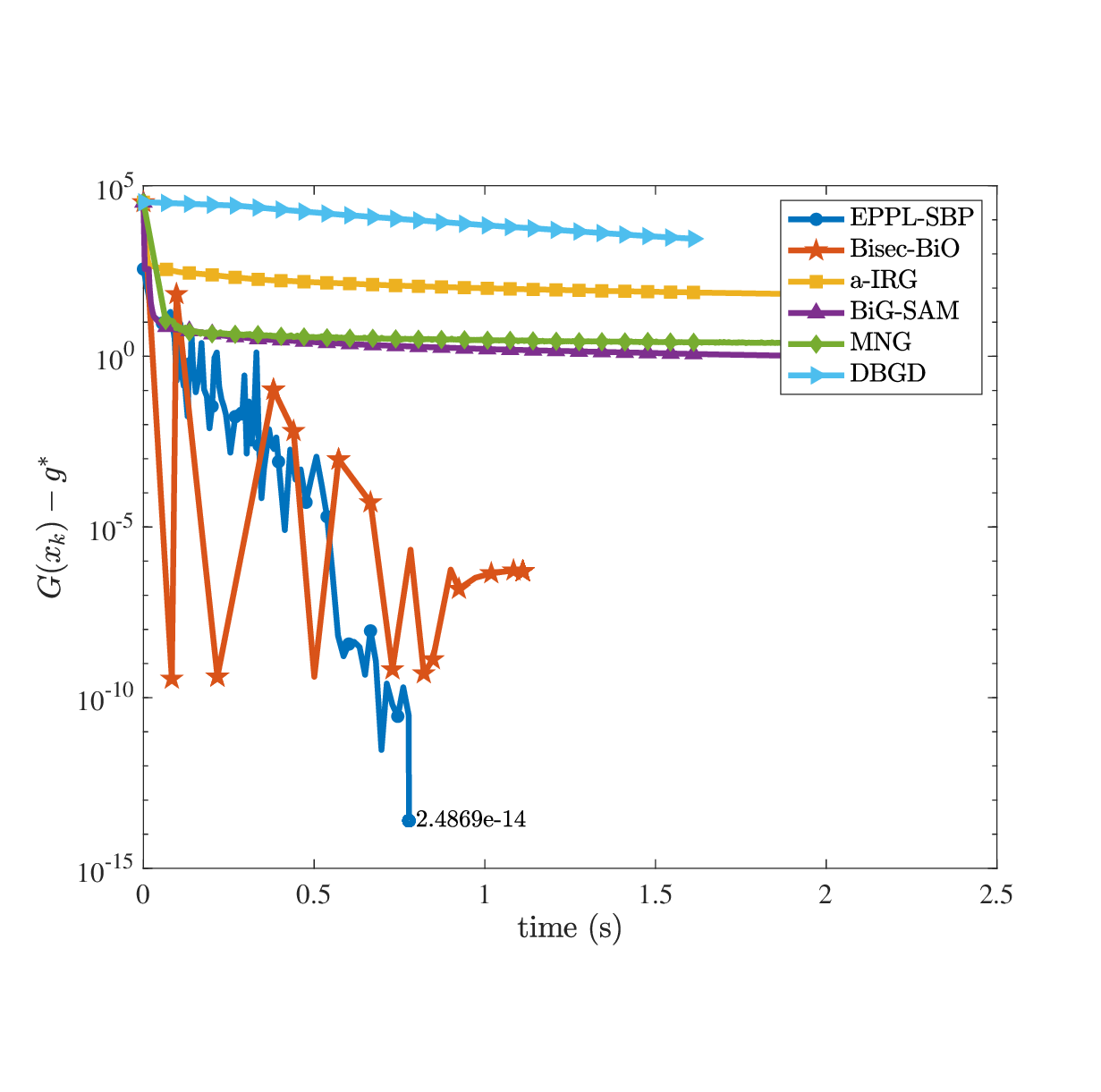}
	\end{minipage}\hfill
	\begin{minipage}{0.48\linewidth}
		\centering
		\includegraphics[width=\linewidth]{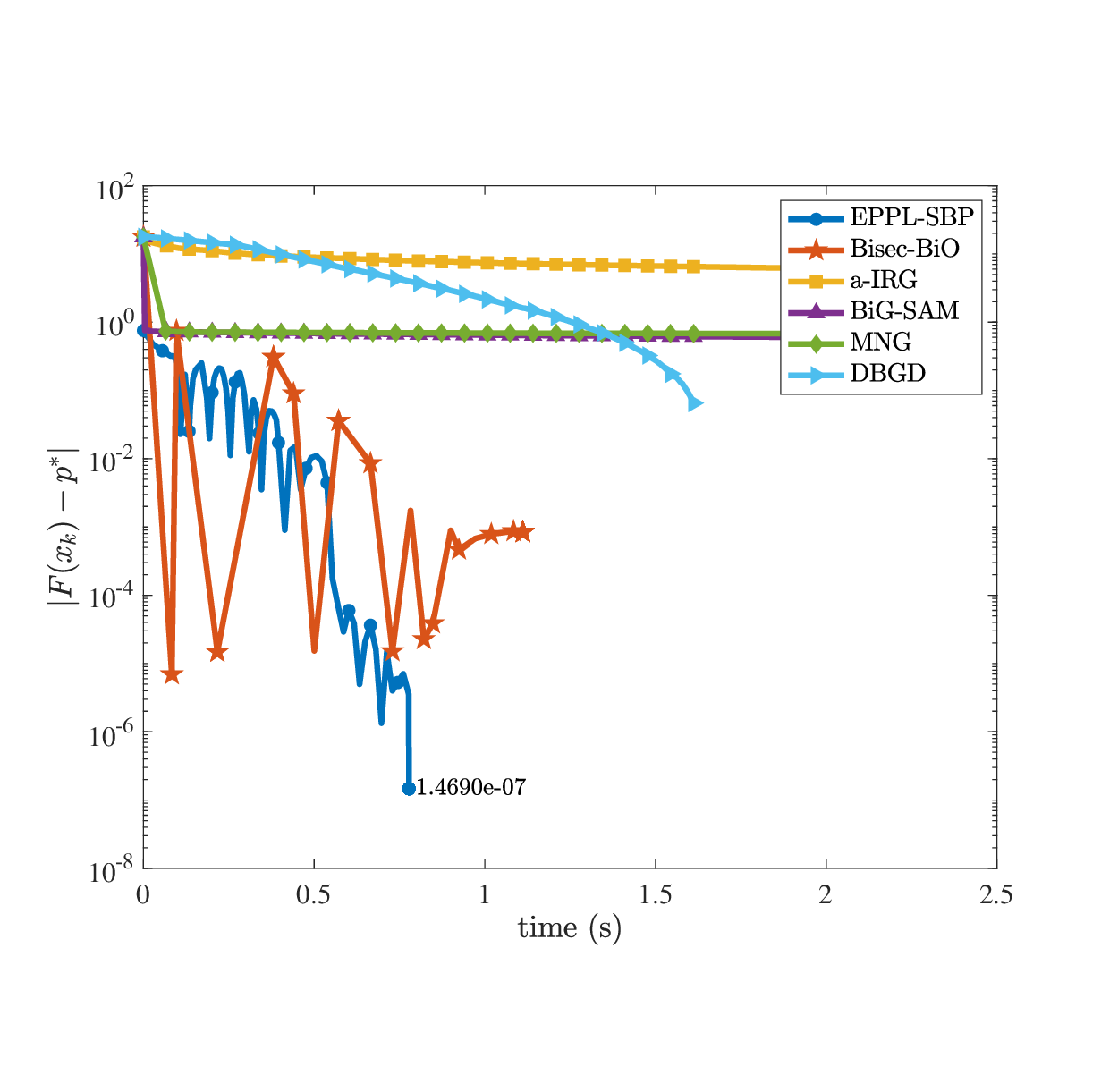}
	\end{minipage}
	\caption{
		Comparison of EPPL-SBP with Bisec-BiO, a-IRG, BiG-SAM, MNG, and DBGD. Panel (a) reports the lower-level gap $G(x^k)-g^\ast$, while panel (b) reports the upper-level gap $|F(x^k)-p^\ast|$, both plotted against running time.
		}
	\label{fig:yearprediction_compare}
\end{figure}


\begin{table}[H]
	\centering
	\caption{Final status at termination on the YearPredictionMSD linear regression instance.}
	\label{tab:final_hit_year}
		\begin{tabular}{lcccc}
	\hline
	Algorithm & Iter. & Time(s) & Lower gap & Upper gap \\
	\hline
	Bisec-BiO & 25 & 1.1114 & $5.000{\times}10^{-7}$ & $8.398{\times}10^{-4}$ \\
	a-IRG & 31333 & 2.1114 & $6.249{\times}10^{1}$ & $5.991{\times}10^{0}$ \\
	BiG-SAM & 30024 & 2.1120 & $9.620{\times}10^{-1}$ & $5.943{\times}10^{-1}$ \\
	MNG & 1411 & 2.1126 & $2.440{\times}10^{0}$ & $6.784{\times}10^{-1}$ \\
	DBGD & 15575 & 1.6114 & $2.778{\times}10^{3}$ & $6.545{\times}10^{-2}$ \\
{EPPL-SBP} & $\mathbf{76}$ & $\mathbf{0.7776}$
	& $\mathbf{2.487{\times}10^{-14}}$ & $\mathbf{1.469{\times}10^{-7}}$ \\
	\hline
\end{tabular}
\end{table}

Figure~\ref{fig:yearprediction_compare} and Table~\ref{tab:final_hit_year} summarize the comparison results. 
Among all tested methods, EPPL-SBP achieves the best overall final solution accuracy. 
Its final lower-level and upper-level gaps are
$
G(x^k)-g^\ast = 2.487\times 10^{-14},
\ 
|F(x^k)-p^\ast| = 1.469\times 10^{-7},
$
which are substantially smaller than those of all baseline methods.

In particular, Bisec-BiO attains a very small final lower-level gap,
$
G(x^k)-g^\ast = 5.000\times 10^{-7}, 
$
but its final upper-level gap remains at the order of $10^{-4}$. 
By contrast, EPPL-SBP simultaneously drives both the lower-level and upper-level gaps to much smaller values. 
Moreover, its total runtime is also competitive: EPPL-SBP terminates in $0.7776$ seconds, which is faster than all compared methods except DBGD and Bisec-BiO, while delivering significantly higher final accuracy.

The remaining methods perform substantially worse on this problem. 
At termination, a-IRG still has very large lower-level and upper-level gaps. 
BiG-SAM and MNG improve upon a-IRG, but neither method comes close to the final accuracy achieved by EPPL-SBP. 
DBGD exhibits the largest lower-level gap and also a relatively large upper-level gap. 
These results indicate that, on this minimum-norm least-squares bilevel model, the proposed EPPL-SBP framework provides the most accurate final solution among all tested methods.

\section{Conclusions}\label{sec:conclusion}

In this paper, we proposed an exact-penalty prox-linear method for bilevel optimization based on the $\ell_1$-norm of the lower-level gradient. The proposed penalty allows one to replace the implicit lower-level optimality requirement by a nonsmooth but tractable single-level objective. 
To solve the resulting penalized problem, we developed a prox-linear method and analyzed its stationarity-oriented convergence behavior. We then specialized the method to the  simple bilevel problem, where the subproblem admits an explicit dual reformulation as a box-constrained quadratic optimization problem. This structure enabled us to design a dual SPG solver with closed-form primal recovery and to establish convergence of the inner dual iterations.
Numerical experiments on the minimum-norm least-squares bilevel model 
demonstrated that the proposed method is effective in driving both the lower-level and upper-level gaps to high accuracy.

Several directions remain for future work. One is to extend the present framework to constrained simple bilevel problems and, more generally, to bilevel models with explicit lower-level constraints. 
It would also be of interest to investigate adaptive penalty update rules and more scalable inner solvers for large-scale machine learning applications.

\section*{Conflict of interest}
The authors declare that they have no conflict of interest.


\bibliographystyle{unsrt}
\bibliography{ref}
\end{document}